\newcommand{\R}{\mathbb{R}}
\newcommand{\Q}{\mathbb{Q}}
\newcommand{\FF}{\mathbb{F}}
\newcommand{\Z}{\mathbb{Z}}
\newcommand{\NN}{\mathcal{N}}
\def\N{{\rm N}}
\newcommand{\C}{\mathbb{C}}
\def\disc{{\rm disc}}
\def\sign{{\rm sign}}
\def\m{{\mathfrak m}}
\def\f{{\mathfrak f}}
\def\ker{{\rm ker}}
\def\log{{\rm log}}
\def\Cl{{\rm Cl}}
\def\mod{{\rm mod}}
\def\Gal{{\rm Gal}}
\def\SS{{\overline S}}
\def\M{{\rm M}}
\def\J{{\mathcal J}}
\def\E{{\mathcal E}}
\def\U{{\mathcal U}}
\def\B{{\mathcal B}}
\def\F{{\rm F}}
\def\L{{\rm L}}
\def\K{{\rm K}}
\def\k{{\rm k}}
\def\W{{\rm W}}
\def\H{{\rm H}}
\date{\today}
\author{Roslan Ibara Ngiza Mfumu}
\address{Faculté des Sciences et Techniques,  Marien Ngouabi University, Brazzaville, Republic of Congo \and FEMTO-ST, Universit\'e de Franche-Comt\'e, CNRS, 15B Avenue des Montboucons, 25000 Besan\c con, France}
\email{ribarang@univ-fcomte.fr, roslancello7@gmail.com}
\author{Christian Maire}
\address{FEMTO-ST, Universit\'e de Franche-Comt\'e, CNRS, 15B Avenue des Montboucons, 25000 Besan\c con, France }
\email{christian.maire@univ-fcomte.fr}
\title{Genus theory, governing field, ramification and Frobenius}
\thanks{The authors thank the International Mathematical Union (IMU) and the GRAID program for their support. This work has been supported by the EIPHI Graduate School (contract ``ANR-17-EURE-0002``) and by the Bourgogne-Franche-Comt\'e Region; by the European Mathematical Society; by the REDGATE project funded by CNRS; and by the Doctoral School SPIM from Bourgogne-Franche-Comt\'e.}
\subjclass{11R37, 11R29, 11R45}
\keywords{Genus theory, governing field, Frobenius}
\begin{document}

\maketitle


\begin{abstract} 
In this work we develop, through a governing field,  genus theory for a number field $\K$ with tame ramification in $T$ and splitting in $S$, where $T$ and $S$ are finite disjoint sets of primes of $\K$. This approach extends that initiated by the second author in the case of the class group. It allows expressing the $S$-$T$ genus number of a cyclic extension $\L/\K$ of degree $p$ in terms of the rank of a matrix constructed from the Frobenius elements of the primes ramified in $\L/\K$, in the Galois group of the underlying governing extension. For quadratic extensions $\L/\Q$, the matrices in question are constructed from the Legendre symbols between the primes ramified in  $\L/\Q$ and the primes in $S$.
\end{abstract}


\section{Introduction}
Let $\K$ be a number field, and let $S$ and $T$ be two finite and disjoint sets of places of~$\K$. We assume that $T$ contains only finite places. Let $\K_T^S$ denote the maximal abelian extension of $\K$, totally decomposed at all places in $S$ (or even $S$-split), unramified outside of~$T$, and with at most tame ramification at the places $v\in T$ (or even $T$-tamely ramified). This is a finite extension, and the Artin map allows us to identify the Galois group $\Gal(\K_T^S/\K)$ with the $S$-ray class group of $\K$ modulo $\m:=\prod_{v \in T}v$, which we denote by $\Cl_{\K,\m}^S$. For more details, see Section \S \ref{section_corpsderayon}.

\smallskip

Now let $\L/\K$ be an extension of number fields ramified at the set $\Sigma$. The genus theory provides information about the class group $\Cl_{\L,\m_\L}^{S_\L}$ in terms of $\Sigma$ and the behavior of the $S$-units of $\L$ in $\L/\K$. See Theorem \ref{theo genres}. 

The first remarkable result in genus theory dates back to Gauss, concerning the $2$-Sylow subgroup of the class group of quadratic extensions of $\Q$ (see \cite[Chapter 1, \S 1]{Ko} and \cite[Chapter IV, \S 4, Exercise 4.2.10]{Gras-livre}). The phenomenon described by Gauss has been studied, developed, and generalized by many authors, including the works of Hasse \cite{Hasse}, Leopoldt \cite{Leopoldt}, Furuta \cite{Furuta}, and others. For more details, see \cite[III.4]{Gras-livre}.

The introduction of the sets $T$ and $S$ was initiated by Jaulent \cite{Jau}, Federer \cite{Federer}, and others. A very good overview of all this can be found in \cite[Chapter II, 2.4, Chapter III, 2.1]{Jau}.

\smallskip

The work presented here is inspired by \cite{Maire2}. We develop the $S$-$T$ genus theory  via a governing extension denoted by $\F_T^S/\K$, where the usual ramification conditions are interpreted through relations between Frobenius elements. As a consequence, and similar to \cite[Theorem 1.3]{Maire2}, questions in genus theory can be translated into questions about the behavior of Frobenius elements in a governing field, for which the Chebotarev density theorem becomes central.

\smallskip

When the base field $\K$ is given and the Galois group of $\L/\K$  is a fixed abelian group, Frei, Loughran, and Newton \cite{FLN} studied the asymptotic behavior of the genus number of $\L/\K$ (for the class group) with respect to the discriminant of $\L$. It would be interesting to revisit their results in light of our work.

\smallskip

Before presenting our results, let us begin by specifying the context.

\subsection{The context} \label{section_contexte}

\subsubsection{Ray class groups} \label{section_corpsderayon}
Let $\K$ be a number field, $T$ a finite set of finite places of $\K$, and $S$ a finite set of places of $\K$, disjoint from $T$. Let us denote $S = S_0 \cup S_{\infty}$, where $S_0$ contains only non-archimedean places and $S_\infty$ contains archimedean places, which we assume to be contained in the set $Pl_{\K,\infty}^{re}$ of real places of $\K$.

For a place $v$ of $\K$, let $\iota_v$ denote the embedding of $\K$ into its completion $\K_v$.

Set \begin{itemize}
 \item[$\bullet$] $I_{\K,T}$  the group of nonzero fractional ideals of $\K$ prime to $T$,
 \item[$\bullet$]  $\displaystyle{\m=\prod_{v \in T} v}$ the ray modulus of $\K$ associated to  $T$,
\item[$\bullet$] $P_{\K,\m}^{S_{\infty}}$ the subgroup of principal ideals $(x)$ of $I_{\K,T}$, $x\equiv 1 \,(\m)$, and $\iota_v(x)>0$ for all $v\in pl^{re}_{\K,\infty}\backslash S_{\infty}$, 
\item[$\bullet$]  $\langle S_0\rangle$ the subgroup of  $I_{\K,T}$ builted over places of $S_0$, 
 \item[$\bullet$]
$R_{\K,\m}^S$ the subgroup $P_{\K,\m}^{S_{\infty}}\langle S_0\rangle$ of $I_{\K,T}$.
\end{itemize}

Let  $\Cl_{\K,\m}^S$ be the $S$-ray-class group modulo  $\m$, {\it i.e.}   $$\Cl_{\K,\m}^S:=I_{\K,T}/R_{\K,\m}^S.$$  By class field theory,  $\Cl_{\K,\m}^S$ is isomorphic to the Galois group of  $\K_{T}^S/\K$, where $\K_{T}^S/K$ is the maximal abelian extension $\K$, $T$-tamely ramified and $S$-split, see 
  \cite[Chapter II, \S 5]{Gras-livre}.

\subsubsection{Genus fields and genus numbers}
 Let $p$ be a prime number and let $\L/\K$ be a cyclic extension of degree $p$. Denote by $\Sigma$ the set of ramification of $\L/\K$; for an infinite place $v$, we will speak of decomposition versus non-decomposition, but not of ramification.

\smallskip

Let $T_\L$ (respectively $S_\L$) denote the  places of $\L$ lying  above those of $T$ (resp. of $S$), and consider  $\Cl_{\L,\m_\L}^S$, where $\m_\L:=\prod_{w \in T_\L} w$. 

\smallskip

Let $\M/\K$ be the maximal abelian extension  $\K$ contained in $\L_T^S/\K$. 

$$
\xymatrix{
\L \ar@{-}[d] \ar@{-}[r]& \ar@{-}[d]\ar@{-}[r] & \M \ar@{-}[r] & \L^S_{T} \\
 \K\ar@{-}[r] &\K^S_{T} \\
 }
$$ 

The field $\M$ is the     {\it $S$-$T$ genus field} associated with $\L/\K$, and the quantity $(g_T^S)^*=[\M:\L]$ is the {\it  $S$-$T$ genus number}. Set $g^S_T=[\M:\K_T^S]$: this is the quantity $g^S_T$ that we are studying. It can be observed that it is easy to pass from $g_T^S$ to $(g_T^S)^*$ as soon as $\# \Cl_{\K,\m}^S$ is known, and the knowledge of  $g_T^S$ provides information about $\Cl_{\L,\m_\L}^S$.
Of course, the genus theory makes sense when the field  $\L$ is not contained in $\K_{T}^S$, because otherwise  $\M=\K_T^S$. 
The extension $\M/\K$ being abelian, its Galois group can be approached through class field theory, which allows expressing $[\M:\K]$ in terms of the ramification in $\L/\K$ and the units of $\K$, thus leading to a non-trivial lower bound for   $\# \Cl_{\L,\m_\L}^{S_\L}$.

\smallskip

Since $\L/\K$ is cyclic of degree $p$, the Galois group $\Gal(\M/\K_T^S)$ is abelian of exponent $p$ (see Theorem \ref{theo genres}). 
Thus, when $p>2$, the infinite places play no role. Consequently, we assume that $S_\infty=Pl_{\K,\infty}^{re}$ in this case.

\subsubsection{Governing fields} 

We continue with a fixed prime number $p$. We then assume that for  $v\in T$, we have  $N_v =1 $ mod $p$, where $N_v$  is the cardinality of the residue field of the completion $\K_v$ of $\K$ at $v$. Note that without this condition, the $p$-part contributed by the place $v$ of $T$ in $\Cl_{\K,\m}^S$, would be trivial.

\smallskip

Let  $E_{T}^{S} $ be the group of  $S$-units of $\K$ congruent to  $1\,(\mod \, \m)$, that is,  $$E_{T}^{S}  =  \{x\in \K^\times\,\, |\,\, x\equiv 1\,(\mod \, \m) \,,\,v(x)=0 \,\,\forall v\not\in S\}.$$
Here, let us clarify the meaning  $v(x) = 0$.  If $v\in S_0$, we identify the  place $v$ with its  valuation; if $v \in Pl_{\K,\infty}^{re}$, $v(x)=0$ means  $\iota_v(x)>0$; for $v$ archimedean, $v\notin Pl_{\K,\infty}^{re}$, we always have $v(x)=0$.
 
 \medskip
 
 Let $\K'=\K(\zeta_p)$, where  $\zeta_p$ is a primitive $p$th root of unity. 

 {\it The governing field  $\F_T^S$}  associated with the triplet $(\K, T,S)$ is defined as  $$\F_T^S:=\K'(\sqrt[p]{E_T^S}).$$ 
 
We then define $\Gamma_T^S:=\Gal(\F_T^S/\K')$: it is an abelian  $p$-elementary group. 
 
 \medskip
 
 When $T=\emptyset$ and $S=Pl_{\K,\infty}^{re}$, by Dirichlet's theorem the $p$-rank of  $\Gamma^S:=\Gamma_\emptyset^S$ is $r_1+r_2-1+\delta_{\K,p}+\#S_0$, where $(r_1,r_2)$ is the signature of $\K$, and where  $\delta_{\K,p}=1$ if $\zeta_p \in \K$, and $0$ otherwise.

\subsection{Our result} \label{section_resultat}

To simplify the presentation of our result, we assume that the set $\Sigma$ does not contain any places above $p$; in other words, the extension $\L/\K$ is tamely ramified. To further simplify the presentation, we also assume that the places in $S$ split in $\L/\K$.

\medskip

For each place $v\in \Sigma$, we choose a place $w$ of $\K'$ above $v$  and set $\sigma_v:=\sigma_w$, the Frobenius element associated with $w$ in $\Gamma_T^S:=\Gal(\F_T^S/\K)$; of course, this element depends on the choice of $w$, but we will see that the conditions involving it are independent of this choice.

\medskip

\smallskip

Let $m=\# \Sigma$, and let $\{e_{v_1},\ldots , e_{v_{m}}\}$ be a basis of $(\FF_p)^{m}$ indexed by the places  $v$ of $\Sigma$.

We then consider the linear map  $\Theta_{\Sigma,T}^S$ defined by
$$
\begin{array}{rcll}
\Theta_{\Sigma,T}^S \, : \, (\FF_p)^{m}& \longrightarrow  & \Gamma_T^S \\
  e_{v}  & \longmapsto  & \sigma_{v} .
\end{array}
$$

We have the following result

\begin{theo}\label{Theo1}
Under the previous conditions, we have
    $$g^S_T=\#\ker(\Theta_{\Sigma,T}^S).$$
\end{theo}

\medskip

The essence of our work is to translate the ramification conditions through Frobenius elements in a governing field. Therefore, if we ensure that the Frobenius elements associated with the places of $T$ form a linearly independent set in $\Gamma^S:=\Gal(\F^S/\K')$, then we can express quite easily the Galois group $\Gamma_T^S$.

\smallskip
  Set $H_T:=\sum_{v\in T} \FF_p \sigma_v \subset \Gamma^S$. 

\begin{prop} \label{prop_T}
Suppose that the family $\lbrace \sigma_v,\,v\in T \rbrace$ forms a linear independent set over  $\FF_p$ in $\Gamma^S$.
Then $\Gamma_T^S\simeq \Gamma^S/H_T$.
\end{prop}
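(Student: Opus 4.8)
The plan is to realize both governing groups through Kummer duality and to identify the claimed quotient with the kernel of the natural restriction map. Since $E_T^S\subseteq E^S$ we have $\F_T^S\subseteq\F^S$, and restriction gives a surjection $\mathrm{res}\colon\Gamma^S\twoheadrightarrow\Gamma_T^S$ whose kernel is $\Gal(\F^S/\F_T^S)$; thus $\Gamma_T^S\simeq\Gamma^S/\Gal(\F^S/\F_T^S)$. As every group in sight is an $\FF_p$-vector space, the whole proposition reduces to the single identity $\Gal(\F^S/\F_T^S)=H_T$, which I would establish by proving one inclusion directly and then matching dimensions.

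First I would prove $H_T\subseteq\Gal(\F^S/\F_T^S)$, which is unconditional. Fix $v\in T$ and a place $w$ of $\K'$ above it. The hypothesis $N_v\equiv 1\pmod p$ forces $v\nmid p$, and since each $u\in E^S$ is a $v$-unit, $w$ is unramified in $\F^S/\K'$, so $\sigma_v$ is well defined (and, $\Gamma^S$ being abelian, independent of the choice of $w\mid v$). For $u\in E_T^S$ one has $u\equiv 1\pmod{\m}$, hence $u\equiv 1\pmod w$, so the $p$-th power residue symbol governing the action of Frobenius, namely $u^{(N_w-1)/p}\bmod w$, is trivial. This means $\sigma_v$ fixes $\sqrt[p]{u}$ for all $u\in E_T^S$, hence fixes $\F_T^S=\K'(\sqrt[p]{E_T^S})$; therefore each $\sigma_v$ with $v\in T$, and so all of $H_T$, lies in $\Gal(\F^S/\F_T^S)$.

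The second step is a dimension bound $\dim_{\FF_p}\Gal(\F^S/\F_T^S)\le \#T$. By Kummer theory $\Gamma^S\simeq\mathrm{Hom}(\overline{E^S},\mu_p)$ and $\Gamma_T^S\simeq\mathrm{Hom}(\overline{E_T^S},\mu_p)$, where the bars denote images in $\K'^\times/(\K'^\times)^p$ and the restriction map is dual to the inclusion $\overline{E_T^S}\hookrightarrow\overline{E^S}$. Hence $\dim_{\FF_p}\Gal(\F^S/\F_T^S)=\dim_{\FF_p}\bigl(\overline{E^S}/\overline{E_T^S}\bigr)$, which equals the $p$-rank of the finite group $E^S/E_T^S$. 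The reduction map embeds $E^S/E_T^S$ into $(\mathcal{O}_\K/\m)^\times\simeq\prod_{v\in T}\FF_{N_v}^\times$, whose $p$-rank is exactly $\#T$ since $p\mid N_v-1$ for each $v\in T$; this gives the desired bound. Finally I would combine everything: linear independence of $\{\sigma_v:v\in T\}$ yields $\dim_{\FF_p}H_T=\#T$, so the chain $\#T=\dim_{\FF_p}H_T\le\dim_{\FF_p}\Gal(\F^S/\F_T^S)\le\#T$ forces equality throughout, whence $H_T=\Gal(\F^S/\F_T^S)$ and $\Gamma_T^S\simeq\Gamma^S/H_T$.

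I expect the main obstacle to be the dimension count of the third step, where one must carefully pass between the multiplicative group $E^S/E_T^S$ and its $\FF_p$-linearized shadow $\overline{E^S}/\overline{E_T^S}$, keeping track of $p$-rank rather than order. In particular the identification $\overline{E^S}\simeq E^S/(E^S)^p$ is not automatic and relies on the prime-to-$p$ degree argument $\K^\times\cap(\K'^\times)^p=(\K^\times)^p$ (which holds because $[\K':\K]\mid p-1$); this is the technical point that makes the Kummer dual over $\K'$ computable in terms of the $S$-units of $\K$ themselves.
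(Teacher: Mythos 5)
Your proof is correct, but it is organized quite differently from the paper's. The paper (Proposition \ref{proposition_reduction_T}) argues by induction on $\#T$: at each step it adjoins one place $v$, compares the congruence subgroup $E^S_{T_0\cup\{v\}}$ with the group of local $p$-th powers via Hensel's lemma, and uses Kummer duality together with Lemma \ref{lemm_dec} and the linear independence hypothesis to show that the new congruence condition cuts the Galois group down by exactly one copy of $\Z/p$, generated by the restriction of $\sigma_v$. You avoid the induction entirely: you prove the \emph{unconditional} inclusion $H_T\subseteq\Gal(\F^S/\F_T^S)$ by the power-residue-symbol computation (which is the same phenomenon as the paper's Hensel step: $u\equiv 1\ (v)$ with $v\nmid p$ makes $u$ a local $p$-th power, so the Frobenius acts trivially on $\sqrt[p]{u}$), you prove the \emph{unconditional} bound $\dim_{\FF_p}\Gal(\F^S/\F_T^S)\le\#T$ from Kummer duality and the embedding $E^S/E_T^S\hookrightarrow\prod_{v\in T}\FF_{v}^\times$, and you invoke the hypothesis only once, at the very end, to force equality by a dimension sandwich. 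The raw ingredients (Kummer duality, residue-field embedding, Hensel) are the same as the paper's, but your decomposition cleanly separates what holds unconditionally from where independence enters, whereas the paper's induction identifies each intermediate layer explicitly. Two small repairs are needed. First, your claim that $\dim_{\FF_p}\bigl(\overline{E^S}/\overline{E_T^S}\bigr)$ \emph{equals} the $p$-rank of $E^S/E_T^S$ is more than you can assert without argument: what is immediate is that the natural map $(E^S/E_T^S)\otimes\FF_p\twoheadrightarrow\overline{E^S}/\overline{E_T^S}$ is surjective (for $p=2$, sign obstructions at real places outside $S_\infty$ can in principle make its kernel nontrivial), but this surjection gives exactly the inequality $\le$ that your sandwich uses, so the proof survives. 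Second, the Frobenius $\sigma_v$ is \emph{not} independent of the choice of $w\mid v$ merely because $\Gamma^S$ is abelian: different choices are conjugate under $\Gal(\K'/\K)$ acting through the cyclotomic character, so $\sigma_v$ is well defined only up to a scalar in $\FF_p^\times$ (this is the paper's lemma $g_{w'}=g_w^a$); this is harmless for your argument, since both $H_T$ and the property of fixing $\F_T^S$ are invariant under such scalars, but the justification you give is not the right one.
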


The condition of linear independence has an interpretation. Indeed, according to the Gras-Munnier theorem (see \cite[Chapter V, Corollary 2.4.2]{Gras-livre}, \cite{GM}), a relation between the Frobenius elements
$\sigma_v$, $v \in T$, implies the existence of a cyclic extension of degree $p$ of $\K$, $T$-ramified and $S$-split, and consequently contributes "trivially" to $g_T^S$. Thus, the condition of linear independence forces the context to avoid this situation.

\medskip

Theorem \ref{Theo1} becomes interesting when we have a good understanding of the governing field $\F^S$, especially when we know about the units. Typically, this occurs for $\K=\Q$, but also, as noted in \cite[\S 3.5.3]{Maire2}, for $p=3$ and for base field $\K=\Q(\zeta_3)$.

By introducing $S$-places, the units become $S$-units, and when the field $\K$ is principal, the governing field is relatively  easy to describe. A remarkable situation arises when $p=2$ and $\L/\Q$ is a quadratic extension. The quantity  $g_T^S$ corresponds to the kernel of a matrix constructed using Legendre symbols. Let's develop a very specific situation.

Let $\Sigma=\{p_1,\cdots, p_m\}$ and $S_0=\{\ell_1,\cdots, \ell_{s'}\}$. We assume that  $\Sigma\cap\left(\{S_0\}\cup \{2\}\right)=\emptyset$.
When  $S_\infty$ contains the unique infinite  place $v_\infty$, let' set $\ell_0=-1$ (we identify $\ell_0$ with $v_\infty$). 
Set  $s:=\#S$.

Here, we don't impose any condition on the behavior of  $v\in S$ in $\L/\Q$.

\smallskip

Let $A=(a_{i,j})$ be the matrix of size $s \times m$ defined by $$a_{i,j}=\left(\dfrac{\ell_j}{p_i}\right),$$ where $\left(\dfrac{\ell_i}{p_j}\right) \in \FF_2$ is the additive  Legendre symbol.

Next, consider  the diagonal matrix $D=(d_{i,j})$  size $s \times s$ defined by
$$d_{j,j}= \left\{\begin{array}{l} 1 \,\, {\rm if } \ \ell_j \ \text{\rm is inert in }\L/\Q, \\
      0                          \,  \, {\rm if }  \ \ell_j \ \text{\rm splits in }\L/\Q.                                                                                \end{array}
 \right.$$ 

Finally, let  $M=\left( A D \right)  $ be the matrix of  size $(m+s)\times s$. 

\begin{coro}
 Under the previous conditions, we have:
    $$g^S_\emptyset=\#\ker(A).$$
\end{coro}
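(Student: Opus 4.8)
The plan is to specialize the governing-field description of Theorem~\ref{Theo1} to $\K=\Q$, $p=2$, $T=\emptyset$, and then to make the map $\Theta^S_{\Sigma,\emptyset}$ completely explicit via Kummer theory, at which point its matrix is exactly $A$. First I would observe that since $p=2$ we have $\K'=\Q(\zeta_2)=\Q=\K$, so there is no ambiguity in the base field and the governing field is $\F^S=\Q\big(\sqrt{E^S}\big)$, where $E^S=E^S_\emptyset$ is the group of $S$-units of $\Q$. Because $\Q$ has class number one and unit group $\{\pm1\}$, the group $E^S$ is generated by $-1$ (present exactly when $v_\infty\in S_\infty$, in which case we write $\ell_0=-1$) together with the rational primes $\ell_1,\dots,\ell_{s'}\in S_0$; these are independent modulo squares, so $E^S/(E^S)^2\cong(\FF_2)^s$ with basis $\{\ell_i\}$, and $\F^S=\Q(\sqrt{\ell_0},\dots,\sqrt{\ell_{s'}})$ is multiquadratic of degree $2^s$. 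Consequently $\Gamma^S=\Gal(\F^S/\Q)\cong(\FF_2)^s$, and Kummer theory supplies a perfect pairing $\Gamma^S\times E^S/(E^S)^2\to\{\pm1\}$.

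Next I would compute the Frobenius elements $\sigma_{p_j}$ appearing in $\Theta^S_{\Sigma,\emptyset}$. Since $\Sigma\cap(\{S_0\}\cup\{2\})=\emptyset$, each $p_j\in\Sigma$ is prime to $2\prod_i\ell_i$ and hence unramified in the multiquadratic field $\F^S$; as $\F^S/\Q$ is abelian, $\sigma_{p_j}$ is a well-defined element of $\Gamma^S$, independent of the chosen place above $p_j$. Pairing against the basis, the $i$-th coordinate of $\sigma_{p_j}$ is $\sigma_{p_j}(\sqrt{\ell_i})/\sqrt{\ell_i}$, which by the standard description of Frobenius in a quadratic field equals, under $\{\pm1\}\cong\FF_2$, the additive Legendre symbol $\big(\tfrac{\ell_i}{p_j}\big)$ (for $i=0$ this is $\big(\tfrac{-1}{p_j}\big)$, i.e.\ whether $p_j\equiv1\bmod 4$, the correct splitting datum for $\Q(i)/\Q$). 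Thus, after identifying $\Gamma^S\cong(\FF_2)^s$ through the dual basis, the column of the matrix of $\Theta^S_{\Sigma,\emptyset}$ indexed by $p_j$ is precisely $\big(\big(\tfrac{\ell_i}{p_j}\big)\big)_i$, so this matrix is exactly $A$.

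Finally, Theorem~\ref{Theo1} applied with $T=\emptyset$ gives $g^S_\emptyset=\#\ker(\Theta^S_{\Sigma,\emptyset})=\#\ker(A)$, which is the claim. The point deserving attention is that Theorem~\ref{Theo1} was stated under the simplifying hypothesis that the places of $S$ split in $\L/\Q$, whereas here no such condition is imposed; the substantive step is therefore to check that the splitting type of the $S$-places in $\L$ — the data encoded by the diagonal matrix $D$ — plays no role in $g^S_\emptyset$. This holds because both the governing field $\F^S$ and the homomorphism $\Theta^S_{\Sigma,\emptyset}$ depend on $\L$ solely through its ramified set $\Sigma$ and the associated Frobenius elements, and never through the behavior of $S$ in $\L$. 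Hence the governing-field formula (in the general form underlying Theorem~\ref{Theo1}) applies unchanged and returns $\#\ker(A)$ in every case. I expect this independence from the splitting of $S$, rather than the routine Kummer–Legendre bookkeeping, to be the genuinely substantive point.
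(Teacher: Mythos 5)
Your first two paragraphs are fine and reproduce exactly the paper's own computation (Lemma \ref{prop_quad}\,$(a)$ and \S 5.1): for $\K=\Q$, $p=2$, $T=\emptyset$ the governing field is the multiquadratic field $\Q(\sqrt{\ell_0},\ldots,\sqrt{\ell_{s'}})$ and the column at $p_j$ of the governing map is the vector of additive Legendre symbols $\left(\frac{\ell_i}{p_j}\right)$. The genuine gap is your final step: the claim that the splitting behaviour of the $S$-places in $\L$ plays no role in $g^S_\emptyset$. This is false. What depends only on $\Sigma$ is the governing field $\F^S$ and the Frobenius elements; but the map whose kernel computes $g^S_T$ in the general (non-split) case is \emph{not} the map of Theorem \ref{Theo1} on $(\FF_2)^m$. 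By Theorem \ref{theo2} and Corollary \ref{corollaire rang} it is defined on $\left(U^S_{\K,\Sigma'}/W\right)^{\lor}\simeq(\FF_2)^{m+s^{ns}}$, and $W$ contains the local norm groups $W_v=\N_{\L_v/\K_v}\L_v^\times$ at $v\in S$ --- data that depends on $\L$ precisely through the splitting of the $S$-places. Every non-split $v\in S$ contributes an extra column $g_v$ (cases $(ii)$ and $(iv)$ of \S\ref{section_frob}), namely the standard basis vector of the row indexed by $v$; these are exactly the nonzero columns of $D$, and they change the kernel. This is the whole reason the paper introduces $D$ and $M$ just before the corollary; your argument erases them.

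Concretely, take $\Sigma=\{5\}$, $S=S_0=\{3\}$, $S_\infty=\emptyset$, $\L=\Q(\sqrt{5})$. Then $A=\left[\left(\frac{3}{5}\right)\right]=[1]$ additively, so $\#\ker(A)=1$. But $3$ is inert in $\Q(\sqrt{5})$, $E^S=\langle 3\rangle$, and $(E^S:E^S\cap\NN_{\L/\K})=2$, so Theorem \ref{theo genres} gives $\log_2 g^S_\emptyset=\#S^{ns}+\#\Sigma'-1=1+1-1=1$, i.e. $g^S_\emptyset=2\neq\#\ker(A)$. Structurally this had to happen: the genus field $\M$ always contains $\L$, and here $\K_T^S=\Q$, so $g^S_\emptyset\geq 2$; equivalently, the product formula forces the all-ones vector into the kernel of the true matrix (the one with the non-split columns of $D$ adjoined to $A$), whereas $\ker(A)$ can be trivial, as it is here. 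Two consequences for your write-up. First, your "independence from splitting" assertion is the one substantive step, and it fails, so the proof collapses. Second, the target statement as printed is itself defective: without a splitting hypothesis the correct assertion --- and what \S 5.1 of the paper actually establishes --- is that $g^S_\emptyset$ is the order of the kernel of the matrix obtained from $A$ by adjoining the nonzero columns of $D$ (keeping the zero columns of $D$ as well would overcount by $2^{\#\{v\in S\ \text{split in}\ \L\}}$). Your formula $g^S_\emptyset=\#\ker(A)$ is correct exactly when every place of $S$ splits in $\L/\Q$, but in that case it is literally Theorem \ref{Theo1}, and the content of the corollary --- the matrix $D$ --- is empty.
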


\

The rest of our work consists of four sections. In Section \ref{section_genre}, we introduce and develop the elements of genus theory that are useful for ours results. Section \ref{section_gouvernant} is dedicated to the governing field. It is also in this section that we prove Proposition \ref{prop_T}. Section \ref{section_resultatprincipal} focuses on the main theorem and its proof. In the final section, we provide two applications.


\section{Elements of genus theory} \label{section_genre}

\subsection{$S$-$T$ genus formula} 

For this part, we refer, for example, to \cite[Chapter IV, \S 4]{Gras-livre}, \cite[Chapter III, \S 2]{Jau}, or \cite{Maire1}.

Let's go back to the framework of Section \S \ref{section_contexte}.
Let $T$ and $S$ be two finite disjoint sets of places of   $\K$, non-archimedean for $T$ and arbitrary for   $S=S_0\cup S_\infty$. 

Let $\L/\K$ be a cylic extension of degree $p$.

We denote by $$ E^S _T\cap \NN_{\L/\K}:=E^S_T\cap\N_{\L/\K}(U^S _{\L,\m}), $$ the elements of  $E^S _T$ that are locally norms everywhere in  $\L/\K$.

\medskip

The following theorem can be formulated in a more general context  (see \cite[Chapter IV]{Gras-livre}), but we will focus on the case of cyclic extension of degree $p$. 

\begin{theo}\label{theo genres} 
Let $\L/\K$ be a cylicc extension of degree $p$ with ramification set $\Sigma$. 
Then  $\Gal(\M/\K^S_{T})$ is an abelian group of exponent  $p$. In particular, $g_T^S$ is a power of  $p$, and 
$$\log_p\left(g^S_T\right)= \# S^{ns}+\#\Sigma \backslash  \Sigma \cap (S\cup T)- \log_p\left(E^S _T:E^S_T\cap\NN_{\L/\K}\right),$$
where $S^{ns}$ denotes the set of places of places in   $S$ that are not split in  $\L/\K$.
\end{theo}

Thus, the study of $g^S_T$ is closely related to the quantity $ E^S _T\cap \NN_{\L/\K}$. This will be done through the governing field $\F_T^S$. To achieve this, Proposition \ref{proposition1} from the upcoming section is central.

\medskip

We set $\Sigma':=\Sigma \backslash  \Sigma \cap (S\cup T)$.


\subsection{Genus fields and ray class fields}

Let  $\L/\K$ be an abelian extension. 
For  $v\in Pl_\K$, we denote by  $D_v:=D_v(\L/\K)$ the decomposition group of  $v$ in $\L/\K$ and by   $I_v:=I_v(\L/\K)$ its inertia group. It is worth mentioning that for an archimedean place $v$, we do not speak of ramification but rather of non-decomposition. 

We now make the choice of a place  $w|v$, and we set $\L_v:=\L_w$. Recall that the local reciprocity symbol provides the following isomorphisms:
$$\dfrac{\K_v^{\times}}{\N_{\L_v/\K_v}\L_v^{\times}}\simeq D_v, \ \ \dfrac{U_v}{\N_{\L_v/\K_v}U_{\L_v}} \simeq I_v\cdot$$
Here,   $U_v \subset \K_v^\times$ denotes the group of local units at $v$, and  $\N_{\L_v/\K_v}$ denotes the norm map of the local extension $\L_v/\K_v$.
For a real infinite place $v$, we adopt the convention $U_v=(\R^\times)^2$, and for a complex place $U_v=\C^\times$.

\smallskip

Thus, \begin{itemize}
\item[$-$] for places $v\in \Sigma':=\Sigma \backslash \Sigma \cap (S\cup T)$, the Artin map induces a surjective morphism   from $U_v$ to $I_v$, with kernel $W_v:=\N_{\L_v/\K_v}U_{\L_v}$,
\item[$-$] for places  $v\in S$, the Artin map induces a surjective morphism   from  $\K_v^\times$ to  $D_v$, with kernel $W_v:=\N_{\L_v/\K_v} \L_v^\times$. Note that $W_v=\K_v^\times$ if and only if $v$ splits in $\L/\K$.
\end{itemize}

\medskip

Set $$\displaystyle{W = \prod_{v\in (S\cup \Sigma) \backslash (T\cap \Sigma)}W_v= \prod_{v\in \Sigma'}W_v \prod_{v\in S} W_v}.$$

\medskip

\begin{rema}\label{rema0}
Observe that if $\L/\K$ s cyclic of degree  $p$, then for any place $v\in \Sigma'\cup S$ we have $\iota_v((E_T^{S})^p) \subset W_v$.
\end{rema}

\medskip

\begin{defi}
We denote by $\K_{\Sigma, S,T}$ the abelian extension of $\K$ corresponding, via the global Artin map, to the id\`ele group  $V$: $$V:=W\left( \prod_{v\notin \Sigma' \cup S \cup T } U_v \right) \left( \prod_{v\in T} U_v^1 \right)  = \left(\prod_{v\in \Sigma' \cup S }W_v \right) \left(\prod_{v\notin  \Sigma' \cup S \cup T} U_v\right) \left( \prod_{v\in T} U_v^1\right).$$
Here, $U_v^1$ is the subgroup of principal units of $U_v$. 
\end{defi}

The following proposition is central.

\begin{prop}\label{proposition1}  

We have $\M=\K_{\Sigma, S,T}$. Moreover

$$\Gal(\K_{\Sigma,S,T} /\K^S_{T})\simeq \dfrac{U_{\K, \Sigma'}^S}{\nu(E^S _T) W},$$
where  $U_{\K,\Sigma'}^S=\prod_{v\in S}\K_v^\times \prod_{v\in \Sigma'}U_v$, and where $\nu\,:\, E^S_T \longrightarrow U_{\K,\Sigma'}^S$ is the diagonal embedding.
\end{prop}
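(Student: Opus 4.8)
The goal is to identify the genus field $\M$ with the class field $\K_{\Sigma,S,T}$ attached to the idèle group $V$, and then to compute the Galois group $\Gal(\K_{\Sigma,S,T}/\K_T^S)$ as the stated quotient. The strategy is to work entirely through the global Artin map, translating the two abstract characterizations of $\M$ (as the maximal abelian-over-$\K$ subextension of $\L_T^S/\K$) into a comparison of idèle groups, and then to extract the Galois group by a direct idèle-theoretic computation.

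\textbf{Identifying $\M=\K_{\Sigma,S,T}$.}
First I would recall that, by class field theory, $\M$ is the abelian extension of $\K$ whose associated idèle group is the largest one contained in the group cutting out $\L_T^S$; concretely, $\M$ is the fixed field of the image of $\Gal(\L_T^S/\K)^{\mathrm{ab}}$, equivalently the compositum inside $\L_T^S$ of all abelian-over-$\K$ pieces. I would then check that the group $V$ in the definition is exactly the kernel of the Artin map $\K_{\mathbb{A}}^\times \to \Gal(\M/\K)$. The key local inputs are already furnished above: at a ramified place $v\in\Sigma'$ the local norm group $W_v=\N_{\L_v/\K_v}U_{\L_v}$ is the kernel of $U_v\twoheadrightarrow I_v$; at $v\in S$ the local norm group $W_v=\N_{\L_v/\K_v}\L_v^\times$ is the kernel of $\K_v^\times\twoheadrightarrow D_v$ and equals $\K_v^\times$ exactly when $v$ splits; at places in $T$ one imposes $U_v^1$ to allow tame ramification; and away from all of these one imposes full local units $U_v$ to forbid ramification. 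The point is that $V$ is, place by place, the smallest subgroup of $\K_{\mathbb{A}}^\times$ that (a) contains the norm group of $\L/\K$ at every relevant place and (b) respects the $T$-tame, $S$-split, outside-$\Sigma'\cup S\cup T$-unramified conditions defining $\L_T^S$. This forces the class field of $V$ to be simultaneously contained in $\L_T^S$ and abelian over $\K$, and maximal such, hence equal to $\M$.

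\textbf{Computing the Galois group.}
For the isomorphism, I would start from $\Gal(\K_{\Sigma,S,T}/\K_T^S)\simeq V_T^S / V$, where $V_T^S$ is the idèle group cutting out $\K_T^S$, namely $V_T^S=\big(\prod_{v\in S}\K_v^\times\big)\big(\prod_{v\notin S\cup T}U_v\big)\big(\prod_{v\in T}U_v^1\big)\cdot\K^\times$. Taking the quotient and using that the components outside $\Sigma'\cup S$ already agree, the quotient localizes at the places of $\Sigma'\cup S$, yielding a quotient of $U_{\K,\Sigma'}^S=\prod_{v\in S}\K_v^\times\prod_{v\in\Sigma'}U_v$ by $W$ together with the contribution of the global units $\K^\times$ meeting this local group. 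The global rational points intersected with the relevant local group reduce, after imposing the $T$-congruence and $S$-integrality conditions, precisely to the diagonal image $\nu(E_T^S)$; Remark \ref{rema0} guarantees that $\nu((E_T^S)^p)\subset W$, so this is a well-defined $\FF_p$-contribution. Assembling, one obtains
$$\Gal(\K_{\Sigma,S,T}/\K_T^S)\simeq \frac{U_{\K,\Sigma'}^S}{\nu(E_T^S)\,W}.$$

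\textbf{Main obstacle.}
The delicate step is the bookkeeping of the global units, i.e. showing that $\K^\times\cap U_{\K,\Sigma'}^S$, after accounting for the congruence condition modulo $\m$ at $T$ and the $S$-integrality away from $S$, contributes exactly $\nu(E_T^S)$ and nothing more. This requires a careful use of the exact sequence relating $\K^\times$, the idèles, and the ray class group $\Cl_{\K,\m}^S$ (equivalently, a diagram chase with the approximation theorem to clear the components at places outside $\Sigma'\cup S$), and matching the local conditions at $T$ with the definition of $E_T^S$ as $S$-units congruent to $1$ modulo $\m$. The local surjectivity and kernel computations at $\Sigma'$ and $S$ are routine given the reciprocity isomorphisms quoted above; the genuine content lies in verifying that no extra global relations arise and that the diagonal embedding $\nu$ captures the full unit contribution.
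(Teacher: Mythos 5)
Your strategy coincides with the paper's: identify $\M$ with the class field of $V$ via the Artin map, then compute $\Gal(\K_{\Sigma,S,T}/\K_T^S)$ as an idèle quotient and isolate the global-unit contribution. Your second half is essentially the paper's computation — the paper writes $\Gal(\K_{\Sigma,S,T}/\K_T^S)\simeq \U^S_{\K,T}\K^\times/V\K^\times\simeq \U^S_{\K,T}/(V\K^\times)\cap\U^S_{\K,T}\simeq \U^S_{\K,T}/V\,\nu(E_T^S)$, then $\U^S_{\K,T}/V\simeq U_{\K,\Sigma'}^S/W$ — and the unit bookkeeping you single out as the "main obstacle" is in fact a routine intersection computation: an idèle $\alpha x$ with $\alpha\in V$, $x\in\K^\times$ lies in $\U^S_{\K,T}$ if and only if $\iota_v(x)\in U_v$ for $v\notin S\cup T$ and $\iota_v(x)\in U_v^1$ for $v\in T$, i.e. $x\in E_T^S$. (A minor point: since your $V_T^S$ contains $\K^\times$ while $V$ does not, the isomorphism should read $V_T^S/V\K^\times$, not $V_T^S/V$.)

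The genuine gap is in the identification $\M=\K_{\Sigma,S,T}$, which you reduce to the assertion that $V$ is "the smallest subgroup containing the norm groups and respecting the conditions defining $\L_T^S$", so that its class field is "forced" to equal $\M$. This skips the heart of the proof. The conditions defining $\L_T^S$, hence $\M$, are ramification and splitting conditions \emph{over} $\L$, whereas $V$ is cut out by subgroups of the $\K_v^\times$; translating between the two is the actual mathematical content and is not automatic — note that $\K_{\Sigma,S,T}/\K$ is ramified at every $v\in\Sigma'$ (it contains $\L$), so the conditions cannot be read naively over $\K$. Two verifications are needed and absent. First, for $\M\subset\K_{\Sigma,S,T}$ one must show the Artin symbol of $\M/\K$ kills each $W_v$; this is not formal, because elements of $W_v$ are norms from $\L_v$, not a priori norms from $\M_v$. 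The paper passes through the tower $\K\subset\L\subset\M$: at $v\in\Sigma'$ the extension $\M/\L$ is unramified above $v$, so every unit of $U_{\L_v}$ is a norm from $\M_v/\L_v$, whence $W_v=\N_{\L_v/\K_v}U_{\L_v}\subset\N_{\M_v/\K_v}\M_v^\times$ by transitivity of norms; at $v\in S$ one uses $\M_v=\L_v$, coming from the total splitting of $\M/\L$ above $S$. Second, for $\K_{\Sigma,S,T}\subset\M$ one must show that $\K_{\Sigma,S,T}/\L$ is unramified above $\Sigma'$, split above $S$ and tame above $T$, so that $\K_{\Sigma,S,T}\subset\L_T^S$ and maximality of $\M$ applies; the paper obtains the splitting at $v\in S$ from the surjections $\K_v^\times/W_v\twoheadrightarrow D_v(\K_{\Sigma,S,T}/\K)\twoheadrightarrow D_v(\L/\K)$, whose composite is an isomorphism, forcing $D_v(\K_{\Sigma,S,T}/\L)$ to be trivial, with the analogous inertia argument at $\Sigma'$. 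You have all the local reciprocity ingredients on the table, but without these two steps the phrase "maximal such, hence equal to $\M$" restates the goal rather than proving it.
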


\begin{proof}
   Let's note that: 
   \begin{itemize} \item[$-$] a  {\it finite} place $v \notin \Sigma'\cup S\cup T$ of $\K$ is unramified in $\M/\K$,
    \item[$-$] a place $v\in T$ is tamely ramified in $\M/\K$.
   \end{itemize}

   Therefore, the global Artin symbol  for the extension $\M/\K$ is trivial on $$\left( \prod_{v\notin \Sigma'\cup S \cup T} U_v \right) \left( \prod_{v\in T} U_v^1 \right).$$ Now let's look at the part  $W$.
   
   For $v\in S$, since  $v$ splits totally in  $\L_S^T/\L$ and thus in $\M/\L$, then $\M_v=\L_v$. Consequently,  every element $\varepsilon$ of  $W_v$ is also locally norm in $\M/\K$. In other words, the local symbol at~$v$ in the extension $\M/\K$ vanishes on $W_v$.
   
   \smallskip
   
   For $v \in \Sigma'$, let  $\varepsilon \in W_v$. Then, by definition of $W_v$, there exists $z\in U_v$ such that $\varepsilon = N_{\L_v/\K_v} (z)$. But since the extension $\M_v/\K_v$ is unramified at  $v$, the element  $z$ is norm in  $\M_v/\L_v$, and thus $\varepsilon$ is norm in  $\M_v/\K_v$.  In other words, here too, the local symbol at $v$ in the extension $\M/\K$ vanishes on $W_v$.

\smallskip In conclusion, the global Artin symbol for the extension $\M/\K$ is trivial on $V$. Therefore, by maximality of $\K_{\Sigma,S,T}$, we have   $\M\subset\K_{\Sigma,S,T}$.

\medskip

Let's show the reverse inclusion. For that, observe that
 $\K_{\Sigma, S,T}/\L$ is an abelian extension such that:  
\begin{itemize}
    \item[$-$] every place $v\in T$ is tamely ramified (possibly unramified);
    \item[$-$] for every place $v\in S$, the following commutative diagram holds: 
$$\xymatrix{\K_v^\times/W_v \ar@{->>}[r] \ar@{->}^\simeq[dr]&D_v(\K_{\Sigma, S,T}/\K)\ar@{->>}[d] \\ 
&D_v(\L/\K) }$$
showing that  $D_v(\K_{\Sigma, S,T}/\L)$ is trivial, hence  $\K_{\Sigma, S, T}/\L$ is decomposed at every place $v\in S$;
    \item[$-$] similary, every place $v\in \Sigma'$ is unramified in $\K_{\Sigma, S,T}/\L$.
\end{itemize}
Thus, $\K_{\Sigma,S,T} $ is contained in $\L_S^T$, and by maximality of  $\M$, we deduce that  $\K_{\Sigma, S,T}\subset \M$. Consequently, $\M=\K_{\Sigma, S,T}$.

\medskip

In summary, if we denote by    $\J_\K$ the id\`ele group of $\K$, and by  $\U_{\K,T}^S$ the id\`ele subgroup given by $$\U_{\K,T}^S:= \prod_{v\in S} \K_v^\times \prod_{v\in T}U_v^1 \prod_{v\notin T\cup S} U_v,$$ we have 
$$\Gal(\K_{\Sigma, S,T}/\K)\simeq \J_\K/ V \K^\times  \text{ and } \Gal(\K^S_{T} /\K)\simeq \J_\K/ \U^S _{\K,T}\K^\times .$$ 

Therefore,  $$\Gal(\K_{\Sigma,S,T}/\K_T^S)\simeq  \U^S_{\K,T}\K^\times/ V \K^\times \simeq  \U^S_{\K,T} /\left(V \K^\times\right)\cap  \U^S_{\K,T} \simeq \U^S_{\K,T}/ V E_T^S.$$
We conclude by noticing that
$\U_{\K,T}^S/V \simeq U_{\K,\Sigma'}^S/ W$.
\end{proof}


\section{Governing fields} \label{section_gouvernant}

Set $\K'=\K(\mu_p)$. We fix a generator $\zeta_p$ of $\mu_p$.
If $B$ is an $\FF_p$-module, let $B^{\lor}:=\hom(B,\mu_p)$.

By Kummer duality, recall that  for a subgroup of  $A$ of $\K'^\times$, one has $A(\K'^\times)^p/(\K'^\times)^p \simeq  \Gal(\K'(\sqrt[p]{A})/\K')^{\lor}$. 
Moreover, if $A\subset \K^\times$, then 
$$\Gal(\K'(\sqrt[p]{A})/\K')^{\lor} \simeq A(\K'^\times)^p/(\K'^\times)^p \simeq A/A\cap (\K'^\times)^p \simeq  A/A\cap (\K^\times)^p \simeq 
 A(\K^\times)^p/(\K^\times)^p,$$
 because $[\K':\K]$ is coprime to $p$.
 
\subsection{Frobenius} \label{section_frob}

For any place $v$ of $\K$, let's define $$\E^S_{T,v}=\lbrace \varepsilon \in E_T^S,\,\,\varepsilon\in (\K_v^\times)^p \rbrace.$$ This group of $S$-units fits into the exact sequence 
$$1\longrightarrow \E^S_{T,v}(\K^\times)^p/(\K^\times)^p \longrightarrow E_T^S(\K^\times)^p/(\K^\times)^p \longrightarrow i_v
(E_T^S )\longrightarrow 1,$$ where  $i_v\,:\, E_T^S \longrightarrow \K_v^\times/(\K_v^\times)^p$ is induced by the embedding   $\iota_v$  of $\K$ into $\K_v$.

By Kummer duality we have 
$$i_v(E_T^S )^{\lor}\simeq (E_T^S(\K^\times)^p/\E^S_{T,v} (\K^\times)^p)^{\lor}\simeq \Gal(\K'(\sqrt[p]{E_T^S})/\K'(\sqrt[p]{\E^S_{T,v}})).$$
This latter Galois group is easily to interpret:
\begin{lemm}\label{lemm_dec}
 One has $\Gal(\K'(\sqrt[p]{E_T^S})/\K'(\sqrt[p]{\E^S_{T,v}}))=D_v(\F_T^S/\K')$.
\end{lemm}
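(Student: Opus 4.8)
The plan is to identify the fixed field of the decomposition group $D_v(\F_T^S/\K')$ and show that it is exactly $\K'(\sqrt[p]{\E^S_{T,v}})$; the asserted equality of Galois groups then follows from the Galois correspondence. Fix a place $w'$ of $\K'$ above $v$. Since $\Gamma_T^S=\Gal(\F_T^S/\K')$ is abelian, the decomposition group $D_v$ does not depend on the choice of place of $\F_T^S$ above $w'$, so it is well defined. Recall the basic feature of abelian extensions: the fixed field $Z_v:=(\F_T^S)^{D_v}$ is the maximal subextension of $\F_T^S/\K'$ in which $w'$ splits completely. Indeed, for $M=(\F_T^S)^H$ the decomposition group of $w'$ in $M/\K'$ is the image of $D_v$ in $\Gamma_T^S/H$, so $w'$ splits in $M/\K'$ iff $D_v\subseteq H$ iff $M\subseteq Z_v$. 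As $\F_T^S/\K'$ is a Kummer extension of exponent $p$, every such $M$ has the form $\K'(\sqrt[p]{B})$ for a subgroup $B$ of $\overline{E_T^S}:=E_T^S(\K^\times)^p/(\K^\times)^p$. Hence it suffices to find the largest $B$ for which $w'$ splits.

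First I would record the local splitting criterion. Since $\mu_p\subset\K'\subset\K'_{w'}$, local Kummer theory shows that, for $b\in E_T^S$, the place $w'$ splits completely in $\K'(\sqrt[p]{b})/\K'$ if and only if $\iota_{w'}(b)\in((\K'_{w'})^\times)^p$ (the polynomial $X^p-b$ splits over $\K'_{w'}$ exactly when $b$ is a local $p$-th power). Consequently $w'$ splits completely in $\K'(\sqrt[p]{B})/\K'$ if and only if each representative in $E_T^S$ of each class of $B$ lies in $((\K'_{w'})^\times)^p$.

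The key step, and the point where the definition of $\E^S_{T,v}$ (formulated over $\K_v$) must be reconciled with the splitting condition (formulated over $\K'_{w'}$), is a descent of $p$-th powers along $\K'_{w'}/\K_v$. The local degree $[\K'_{w'}:\K_v]$ divides $[\K':\K]$, which divides $p-1$ and is therefore prime to $p$. For any extension $L/E$ of local fields of degree prime to $p$, the natural map $E^\times/(E^\times)^p\to L^\times/(L^\times)^p$ is injective: composing it with the norm $\N_{L/E}$ gives multiplication by $[L:E]$, which is an automorphism of the elementary abelian $p$-group $E^\times/(E^\times)^p$. Applying this with $E=\K_v$ and $L=\K'_{w'}$, we obtain, for $b\in\K^\times$, that $\iota_{w'}(b)\in((\K'_{w'})^\times)^p$ if and only if $\iota_v(b)\in(\K_v^\times)^p$. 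Therefore $\{\,b\in E_T^S:\iota_{w'}(b)\in((\K'_{w'})^\times)^p\,\}=\E^S_{T,v}$.

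Combining these steps, the largest $B$ for which $w'$ splits completely is precisely the image of $\E^S_{T,v}$ in $\overline{E_T^S}$, so $Z_v=\K'(\sqrt[p]{\E^S_{T,v}})$. The Galois correspondence then yields $D_v(\F_T^S/\K')=\Gal(\F_T^S/Z_v)=\Gal(\K'(\sqrt[p]{E_T^S})/\K'(\sqrt[p]{\E^S_{T,v}}))$, which is the claim; this also matches the duality isomorphism $i_v(E_T^S)^\lor\simeq\Gal(\K'(\sqrt[p]{E_T^S})/\K'(\sqrt[p]{\E^S_{T,v}}))$ already displayed before the lemma. I expect the descent of $p$-th powers along $\K'_{w'}/\K_v$ to be the only genuine subtlety; the remaining ingredients are routine applications of Kummer theory and of the characterization of the decomposition field in an abelian extension.
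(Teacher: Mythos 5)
Your proof is correct and follows essentially the same route as the paper: both identify the fixed field of $D_v(\F_T^S/\K')$ as the maximal subextension of $\F_T^S/\K'$ split at $v$, reduce via the Kummer correspondence to degree-$p$ pieces $\K'(\sqrt[p]{x})$ with $x\in E_T^S$, and use that the local degree over $\K_v$ is prime to $p$ to descend the local $p$-th power condition from $\K'_{w'}$ to $\K_v$. The only difference is presentational: the paper argues by contradiction from a hypothetical degree-$p$ step above $\K'(\sqrt[p]{\E^S_{T,v}})$ inside the decomposition field, while you characterize the maximal split Kummer subgroup directly and, moreover, supply the norm argument for the descent step that the paper merely asserts.
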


(We will see latter that it does not depend on the choice of a place $w|v$ of $\K'$.)

\begin{proof} Let's denote by $\N$ the subfield of  $\F_T^S/\K'$ corresponding, via Galois theory, to  $D_v(\F_T^S/\K')$. Clearly, $\K'(\sqrt[p]{\E^S_{T,v}}) \subset \N$. For the reverse inclusion,  note that if there exists an intermediate subfield $\N'$ of degree $p$ over $\N$, then, as $\Gal(\K'(\sqrt[p]{E_T^S})/\K'(\sqrt[p]{\E^S_{T,v}})$ is an abelian $p$-elementary group,  $\N'$ arises from the compositum with    a cyclic extension $\N_0/\K'$ of degree $p$: there exists $x \in  E_T^S$ such that $\N_0=\K'(\sqrt[p]{x})$. Now, since $v$ splits  in $\N/\K'$,  it follows that $x\in (\K'_v)^p$, hence  $x\in \K_v^p$ because $[\K'_v:\K_v]$ is coprime to $p$; thus $\N_0 \subset \K'(\sqrt[p]{\E^S_{T,v}})$, which leads to a contradiction.
\end{proof}

When $v$ is unramified in $\F_T^S/\K'$, the Galois group of  $\K'(\sqrt[p]{E_T^S})/\K'(\sqrt[p]{\E^S_{T,v}})$ is generated by the Frobenius element associated to the choice of a place $w|v$ of $\K'$.

\medskip

For now on, let'us fix $w|v$ and set  $\sigma_v:=\sigma_w$, where  $\sigma_w$ is the Frobenius at  $w$ in $\Gal(\K'(\sqrt[p]{\E^S})/\K'$.

\medskip

Next, let  $D_v$ be the  decomposition group of $v$ in the extension $\F_T^S/\K'$. 

\medskip

Let
$$\Phi_v\,:\, \left( E_T^S (\K^\times)^p/\E^S_{T,v} (\K^\times)^p\right)^{\lor} \longrightarrow \Gal(\F_T^S/\K'(\sqrt[p]{\E^S_{T,v}}))=D_v$$
be the isomorphism arising from Kummer duality.
Recall how $\Phi_v$ is defined: for $\chi\in\left(E_T^S(\K^\times)^p/\E^S_{T,v} (\K^\times)^p  \right)^{\lor}$, we associate the element $g_\chi := \Phi_v(\chi)$ defined as follows:
$$g_\chi(\sqrt[p]{\varepsilon})=\chi(\varepsilon)\cdot \sqrt[p]{\varepsilon},$$
for any $\varepsilon \in E_T^S$.

\smallskip

For $v\in \Sigma'\cup S$, consider the local map $\varphi_v$  also derived from Kummer duality:
$$\varphi_v\,:\, \left( A_v/W_v \right)^{\lor} \hookrightarrow \left( A_v/A_v^p \right)^{\lor} \twoheadrightarrow i_v(E_T^S )^{\lor} \stackrel{\simeq}{\longrightarrow}  D_v,$$
where $A_v=U_v$ (respectively $A_v=\K_v^\times$) for $v\in \Sigma'$ (resp. $v\in S$).

When $\left( A_v/W_v \right)^{\lor}$ is non-trivial, it is generated by a certain character $\chi_v=\chi_w$.
Now observe that  if we choose another place  $w'|v$ of $\K'$, then $w'=hw$ for some $h\in \Gal(\K'/\K)$.
Let  $\chi_{w'}:=\chi_{hw}:=\chi_w(h^{-1}(.))$; this is a non-trivial character of $\left( A_{w'}/W_{w'} \right)^{\lor}$.

\begin{lemm} Set $g_w:=\varphi_w(\chi_w)$ and $g_{w'}:=\varphi_{w'}(\chi_{w'})$.  Then  $\langle g_w\rangle=\langle g_{w'}\rangle$.
\end{lemm}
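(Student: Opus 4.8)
The plan is to show that the cyclic subgroup of $D_v$ generated by $g_w$ does not actually depend on which place $w\mid v$ of $\K'$ we choose, by tracking the effect of an element $h\in\Gal(\K'/\K)$ through each of the three maps composing $\varphi_v$. The key structural observation is that the group $\Gal(\K'/\K)$ acts compatibly on all the objects in sight: it permutes the places $w\mid v$ transitively, it acts on the local data $A_w/W_w$ via the identification $A_{w'}=h(A_w)$, $W_{w'}=h(W_w)$, and it acts on the decomposition groups by conjugation, $D_{w'}=hD_wh^{-1}$. Since $[\K':\K]$ is coprime to $p$ and $\Gamma_T^S=\Gal(\F_T^S/\K')$ is an abelian $p$-elementary group, the conjugation action of such an $h$ on the abelian group $\Gamma_T^S$ will be the crucial simplifying ingredient.

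First I would fix $h\in\Gal(\K'/\K)$ with $w'=hw$, and write down explicitly how $h$ transports the chosen character: by definition $\chi_{w'}=\chi_w\circ h^{-1}$. Then I would trace this character through the composite $\varphi_{w'}$, using the naturality (Galois-equivariance) of Kummer duality. Concretely, for $\varepsilon\in E_T^S$ the defining relation $g_\chi(\sqrt[p]{\varepsilon})=\chi(\varepsilon)\cdot\sqrt[p]{\varepsilon}$ from the construction of $\Phi_v$ lets me compute $g_{w'}$ in terms of $g_w$ and the conjugation by (a lift of) $h$. The expected outcome is a clean relation of the form $g_{w'}=h\,g_w\,h^{-1}$ inside $\Gal(\F_T^S/\K')$, expressing that changing the place above $v$ conjugates the Frobenius-type generator.

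The main step is then to argue that this conjugation does not change the subgroup generated. Here I would invoke that $\Gamma_T^S$ is abelian: a lift $\tilde h$ of $h$ to $\Gal(\F_T^S/\K)$ acts on the \emph{normal} abelian subgroup $\Gamma_T^S$ by conjugation, and since $h$ has order prime to $p$ while $g_w$ has order $p$ (or is trivial), the conjugate $\tilde h g_w\tilde h^{-1}$ is again an element of $\langle g_w\rangle$ provided the action fixes the relevant line. More precisely, because the whole construction is canonically associated to $v$ (and $\chi_w$ is characterized up to scalar as a generator of the one-dimensional space $(A_w/W_w)^{\lor}$), the element $g_{w'}$ lands in the same cyclic group $\langle g_w\rangle$; one checks that the ambiguity by $h^{-1}$ applied inside $\chi_w$ exactly cancels the conjugation coming from the transport of Frobenius, leaving the generated subgroup unchanged. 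This yields $\langle g_w\rangle=\langle g_{w'}\rangle$.

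I expect the main obstacle to be bookkeeping the two compensating appearances of $h$: one from the substitution $\chi_{w'}=\chi_w\circ h^{-1}$ at the character level, and one from the conjugation $D_{w'}=hD_wh^{-1}$ at the Galois level, and verifying that they interact so as to preserve the \emph{line} $\langle g_w\rangle$ rather than merely the whole group $D_v$. The delicate point is that $\chi_w$ is only determined up to an $\FF_p^\times$-scalar, so I must argue at the level of generated subgroups throughout and never rely on a canonical choice of generator; the coprimality of $[\K':\K]$ and $p$ is what guarantees that passing between $\K$ and $\K'$ introduces no obstruction and that the relevant $\FF_p$-spaces are stable.
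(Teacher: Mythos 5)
Your overall strategy --- transport the data by an element $h\in\Gal(\K'/\K)$ with $w'=hw$ and invoke Galois equivariance of Kummer duality --- is the same as the paper's, which settles the lemma in one line by citing Kummer theory for the identity $g_{w'}=g_w^a$ with $a\in\FF_p^\times$. However, your write-up has a genuine gap exactly at the decisive step. After reducing to $g_{w'}=\tilde h\,g_w\,\tilde h^{-1}$ (for a lift $\tilde h$ of $h$ to $\Gal(\F_T^S/\K)$), everything hinges on showing that conjugation by $\tilde h$ preserves the \emph{line} $\FF_p\,g_w$ inside $\Gamma_T^S$, and none of the reasons you give establishes this. The coprimality of $[\K':\K]$ and $p$ makes the conjugation action of $\Gal(\K'/\K)$ on the $\FF_p$-vector space $\Gamma_T^S$ semisimple (indeed diagonalizable, since $\Gal(\K'/\K)$ is cyclic of order dividing $p-1$), but diagonalizable is not scalar: for instance an involution acting on $\FF_p^2$ by ${\rm diag}(1,-1)$ moves the line through $(1,1)$. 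The appeal to the construction being ``canonically associated to $v$'' is circular --- the lemma \emph{is} the statement that the construction depends on $w$ only up to the generated subgroup --- and the claim that ``the ambiguity by $h^{-1}$ inside $\chi_w$ exactly cancels the conjugation'' is precisely the computation that has to be carried out, which you never do.

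The missing ingredient, which is what the paper's citation to Kummer theory encodes, is the precise shape of the conjugation action: the Kummer radical $E_T^S(\K'^\times)^p/(\K'^\times)^p$ consists of classes of elements of $\K^\times$, hence is fixed pointwise by $\Gal(\K'/\K)$, while $h$ acts on $\mu_p$ through the mod $p$ cyclotomic character $\omega$. Plugging this into the equivariance of the Kummer pairing $\Gamma_T^S\times E_T^S(\K'^\times)^p/(\K'^\times)^p\to\mu_p$ yields, for \emph{every} $g\in\Gamma_T^S$,
\begin{equation*}
\tilde h\,g\,\tilde h^{-1}=g^{\omega(h)},
\end{equation*}
i.e.\ conjugation acts as a scalar in $\FF_p^\times$. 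Hence every cyclic subgroup of $\Gamma_T^S$ is stable, and in particular $g_{w'}=\tilde h\,g_w\,\tilde h^{-1}=g_w^{\omega(h)}$, which is the paper's relation $g_{w'}=g_w^a$ and gives $\langle g_w\rangle=\langle g_{w'}\rangle$ at once. Without this identity your argument only restates the lemma in an equivalent unproven form. Note that the scalarity is genuinely needed in general: for $v\in\Sigma'$ prime to $p$ the group $D_v$ is cyclic of order dividing $p$ and one could argue crudely (both $g_w$ and $g_{w'}$ are trivial exactly when $i_v(E_T^S)\subset W_vA_v^p/A_v^p$, a condition independent of $w$), but for $v\mid p$ or $v\in S_0$ the group $D_v$ may have $p$-rank at least $2$, and then two nontrivial elements need not generate the same line.
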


\begin{proof} This is a consequence of Kummer theory where we have  $g_{w'}=g_w^a$ for some $a\in \FF_p^\times$  (see, for example, \cite[Chapter I, \S6, Theorem 6.2]{Gras-livre}).\end{proof}

Thus, all the subsequent results do not depend on the choice of $w|v$.

\medskip

We will now describe $\varphi_v$ more precisely.

\medskip

$(i)$ This is the most important case. Let $v\in \Sigma'$. Recall that $U_v/W_v \simeq \Z/p$, hence $U_v^p \subset W_v$. There exists a non-trivial element $\chi_v $ of $\left(U_v/W_v\right)^{\lor}$ such that $$\langle \chi_v \rangle = \left(U_v/W_v\right)^{\lor} \hookrightarrow \left(U_v/U_v^p\right)^{\lor}\cdot$$

Then $\varphi_v(\chi_v)$ is an element $g_v:=g_{\chi_v}$ of $D_v$, defined by $$g_v(\sqrt[p]{\varepsilon})=\chi_v(i_v(\varepsilon))\cdot \sqrt[p]{\varepsilon},$$
for all $\varepsilon \in E_T^S$.

Let $Pl_{\K,p}=\{ v\in Pl_\K, v|p\}$ be the set of $p$-adic places of  $\K$.

Observe that if  $v \notin Pl_{\K,p}\cup S_0$, then $v$ is unramified in $\F_T^S/\K'$, and $U_v^p=W_v$.  
In particular,   $D_v$ is a cylic group generated by the Frobenius $\sigma_v$ at $v$. Thus $$\varphi_v :\langle \chi_v \rangle \twoheadrightarrow \langle \sigma_v \rangle\cdot$$
By taking a suitable power of $\chi_v$, we obtain $\varphi_v(\chi_v)=\sigma_v$.

\medskip

$(ii)$ Let $v \in S_0 \backslash S_0 \cap \Sigma$. Then $v$ is unramified in $\L/\K$.

First, note that if $v$ splits in $\L/\K$, then $W_v=\K^\times_v$ and thus $\varphi_v$ is the trivial map.

Now, suppose  $v$ is inert in $\L/\K$. Then $W_v=U_v\langle \pi_v^p\rangle$ and thus  $$\K_v^\times/W_v\simeq \K_v^\times/U_v\langle \pi_v^p\rangle\simeq \langle \pi_v\rangle /\langle \pi_v^p\rangle\cdot$$
Let $\chi_v$ be the generator of $\left(\langle \pi_v\rangle /\langle \pi_v^p\rangle\right)^{\lor}$ defined by   $\chi_v(\pi_v^i)=\zeta_p^i$.
Then  $g_v:=\varphi(\chi_v)$ satisfies: for all $\varepsilon\in E_T^S$, $$g_v(\sqrt[p]{\varepsilon})=\chi_v(\iota_v(\varepsilon))\cdot \sqrt[p]{\varepsilon}.$$
Thus $\chi_v(\iota_v(\varepsilon))=1$ if and only if the valuation $v(\varepsilon)$ of $\varepsilon$
is zero  modulo $p$.

\medskip

$(iii)$ Let $v \in S_0 \cap \Sigma$. This is analogous to $(i)$, noting that $A_v=\K_v^\times$.

\medskip

$(iv)$ 
Here $p=2$ and $v$ is a real place in $S$.

As in $(iii)$,  if $v$ is splitting in $\L/\K$, then $W_v=\K^\times_v$ and  $\varphi_v$ is the trivial map.  Otherwise for  $\varepsilon\in E_T^S$
$$g_v(\sqrt{\varepsilon})=\sign(\iota_v(\varepsilon))\cdot \sqrt{\varepsilon},$$
where $\sign(\iota_v(\varepsilon))$ is the sign of the embedding   $\iota_v(\varepsilon)$ of $\varepsilon$ in $\K_v$.


\subsection{A restriction}





Let $T=\lbrace v_1,\ldots ,v_t \rbrace$, and for $i=1, \cdots, t$, let $\sigma_{v_i}$ be the  Frobenius at $v_i$ in $\Gamma^S$; set  $\H_T:=\langle \sigma_{v} , \, v\in T\rangle$.

\begin{prop}\label{proposition_reduction_T} 
Suppose that the set $\lbrace \sigma_{v_1},\ldots ,\sigma_{v_t}  \rbrace$ forms a  $\FF_p$-free family in $\Gamma^S$. Then 
$$\Gamma_T^S:=\Gal(\F^S_T/\K') \simeq \Gamma^S/\H_T\cdot$$
\end{prop}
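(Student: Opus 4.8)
The plan is to compare the two Kummer extensions directly rather than through their dual modules. Since $E_T^S\subseteq E^S$, we have $\F_T^S=\K'(\sqrt[p]{E_T^S})\subseteq \K'(\sqrt[p]{E^S})=\F^S$, so restriction gives a surjection $\Gamma^S\twoheadrightarrow\Gamma_T^S$ whose kernel is $N:=\Gal(\F^S/\F_T^S)$. Thus $\Gamma_T^S\simeq\Gamma^S/N$, and the whole statement reduces to proving $N=\H_T$. I will establish this by showing $\H_T\subseteq N$ and $\#N\le p^t$, both unconditionally; the hypothesis that $\{\sigma_{v_1},\ldots,\sigma_{v_t}\}$ is $\FF_p$-free then forces $\#\H_T=p^t$, and the two bounds collapse to $N=\H_T$.

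For the inclusion $\H_T\subseteq N$, fix $v\in T$. Because $N_v\equiv 1\pmod p$ the place $v$ does not lie above $p$, so $v$ is unramified in $\F^S/\K'$ and $D_v(\F^S/\K')=\langle\sigma_v\rangle$. Lemma \ref{lemm_dec}, applied with $T=\emptyset$, identifies this decomposition group with $\Gal(\F^S/\K'(\sqrt[p]{\E^S_{\emptyset,v}}))$, so the fixed field of $\sigma_v$ is $\K'(\sqrt[p]{\E^S_{\emptyset,v}})$. It therefore suffices to check $E_T^S\subseteq\E^S_{\emptyset,v}$, that is, that every $\varepsilon\in E_T^S$ is a local $p$-th power at $v$. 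This is exactly where the defining congruence of $E_T^S$ enters: as $v\in T$, the condition $\varepsilon\equiv 1\pmod{\m}$ gives $\iota_v(\varepsilon)\in U_v^1$; and since $v\nmid p$ the group $U_v^1$ of principal units is uniquely $p$-divisible, whence $\iota_v(\varepsilon)\in(\K_v^\times)^p$. Consequently $\F_T^S\subseteq\K'(\sqrt[p]{\E^S_{\emptyset,v}})$, so $\sigma_v$ fixes $\F_T^S$ and lies in $N$; as $v\in T$ was arbitrary, $\H_T\subseteq N$.

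For the upper bound, the Kummer duality recalled at the start of Section \ref{section_gouvernant} together with $\gcd([\K':\K],p)=1$ yields $\#N=[\F^S:\F_T^S]=\#\left(E^S(\K^\times)^p/E_T^S(\K^\times)^p\right)$. The natural surjection $E^S\twoheadrightarrow E^S(\K^\times)^p/E_T^S(\K^\times)^p$ kills $E_T^S(E^S)^p$, so this order is at most $\#\left(E^S/E_T^S(E^S)^p\right)=p^{d}$, where $d$ is the $p$-rank of the finite abelian group $E^S/E_T^S$. The reduction map $E^S/E_T^S\hookrightarrow\prod_{v\in T}\FF_{N_v}^\times$ is injective by the very definition of $E_T^S$, and each cyclic factor has $p$-rank $1$, so $d\le t$ and $\#N\le p^t$. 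Combining $\H_T\subseteq N$ with $\#\H_T=p^t$ (from freeness) and $\#N\le p^t$ gives $N=\H_T$, hence $\Gamma_T^S\simeq\Gamma^S/\H_T$.

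I expect the genuinely delicate point to be the inclusion $\H_T\subseteq N$: one must convert the exact congruence $\varepsilon\equiv 1\pmod{\m}$ defining $E_T^S$ into the statement that $\sigma_v$ acts trivially on all of $\F_T^S$, and this hinges on the tameness input $v\nmid p$ (equivalently $N_v\equiv 1\pmod p$) forcing $p$-divisibility of the principal units. The counting is then routine, the only care needed being the passage from $E^S/E_T^S$ to its maximal elementary $p$-quotient, where injectivity into $\prod_{v\in T}\FF_{N_v}^\times$ keeps the $p$-rank at most $t$.
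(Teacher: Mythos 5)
Your proof is correct. It uses the same basic ingredients as the paper's — Hensel's lemma at tame places (your unique $p$-divisibility of $U_v^1$), Kummer duality for the degree count, and the embedding $E^S/E^S_T\hookrightarrow\prod_{v\in T}\FF_v^\times$ — but it assembles them differently. The paper argues by induction on $\#T$: at each step it forms a chain of surjections
$$\Z/p\twoheadrightarrow \frac{E^S_{T_0}(\K^\times)^p}{E^S_{T}(\K^\times)^p}\twoheadrightarrow \frac{E^S_{T_0}(\K^\times)^p}{\E^S_{T_0,v}(\K^\times)^p},$$
identifies the dual of the last quotient with $\langle\overline{\sigma}_v\rangle$ via Lemma \ref{lemm_dec}, and invokes the independence hypothesis at every stage to force all of these surjections to be isomorphisms, so that $\Gal(\F^S/\F^S_T)$ is carved out one Frobenius at a time. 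You replace the induction by a single squeeze: the inclusion $\H_T\subseteq N:=\Gal(\F^S/\F^S_T)$ and the bound $\#N\le p^t$ are each proved \emph{unconditionally}, and the freeness hypothesis is used exactly once, to get $\#\H_T=p^t$ and collapse the two bounds. This organization has a structural payoff the inductive proof obscures: it shows that with no hypothesis at all one always has $\H_T\subseteq\Gal(\F^S/\F^S_T)$, so $\Gamma_T^S$ is always a quotient of $\Gamma^S/\H_T$, and linear independence serves only to exclude further collapse (consistent with the paper's remark that a relation among the $\sigma_v$, $v\in T$, is exactly the degenerate situation). What the paper's induction buys in exchange is an explicit identification, at each step, of which character is dual to which Frobenius, which matches how the dual groups are manipulated throughout Section \ref{section_frob}. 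Your two auxiliary claims check out: for $v\in T$ the assumption $N_v\equiv 1\ (\mathrm{mod}\ p)$ does force $v\nmid p$, hence $v$ is unramified in $\F^S/\K'$ (the $S$-units have trivial valuation at $v$ since $T\cap S=\emptyset$) and $U_v^1$ is uniquely $p$-divisible; and the $p$-rank bound $d\le t$ follows since a subgroup of a product of $t$ cyclic groups has $p$-rank at most $t$.
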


\begin{proof}
Let's give a proof by induction on the cardinality of $T$. Recall that for $v\in T$, one has $N_v=1$ mod $p$.
    
$\bullet$ Suppose  $T=\lbrace v \rbrace$. Let $E^S_{\lbrace v \rbrace}=\lbrace \varepsilon \in E^S,\,\,\varepsilon\equiv 1\, (v) \rbrace$. Define $\E_v^S=\{\varepsilon \in E^S,\,\,\varepsilon\in (\K_v^\times)^p \rbrace$. By Hensel's lemma, we have $ E^S_{\lbrace v \rbrace} \subset  \E_v^S$. Moreover,  $E^S/E^S_{\{v\}} \hookrightarrow \FF_v^\times$,
where $\FF_v$ is the residue field at $v$.  Thus, $E^S/E^S_{\{v\}}$ is cyclic of order dividing  $p$.
Since $ E^S_{\lbrace v \rbrace} \subset  \E_v^S$, it follows
\begin{eqnarray} \label{se0} \Z/p\Z  \twoheadrightarrow\dfrac{E^S(\K^\times)^p}{E^S_{\lbrace v \rbrace}(\K^\times)^p} \twoheadrightarrow \dfrac{E^S(\K^\times)^p}{\E^S_v(\K^\times)^p}\cdot\end{eqnarray}

By Lemma \ref{lemm_dec}, we have:
$$\left(\dfrac{E^S(\K^\times)^p}{\E^S_ v(\K^\times)^p}\right)^{\lor} = \langle \sigma_v\rangle \subset \Gamma^S\cdot$$
Now, since $\sigma_v\neq 0$ by assumption, it follows that $\dfrac{E^S(\K^\times)^p}{\E^S_v(\K^\times)^p} \simeq \Z/p\Z$. Thus, from ($\ref{se0}$) we have    $\dfrac{E^S(\K^\times)^p}{E^S_{\{v\}}(\K^\times)^p} =   \langle \sigma_v\rangle^{\lor}$,
or equivalently $\Gal(\F^S/\F_T^S)=\langle \sigma_v\rangle$.
This concludes this case. 

\medskip

$\bullet$ Let's suppose $T=T_0\cup \lbrace v \rbrace$, and that  the proposition is true for $T_0$. Define $\E^S_{T_0,v}=\lbrace \varepsilon \in E^S,\,\, \varepsilon\equiv 1\,(v'),\,\, v'\in T_0 , \,\,\varepsilon\in U_v^p \rbrace$. By Hensel's lemma, we have $E^S_{T}\subset\E^S_{T_0,v}$. Similary as before,  $\dfrac{E^S_{T_0}}{E^S_{T}}\hookrightarrow \FF_v^\times $, implying  that $\dfrac{E^S_{T_0}}{E^S_{T}(E^S_{T_0})^p }$ is a cyclic group of order dividing  $p$, so we have 
\begin{eqnarray}\label{se1} \Z/p\Z \twoheadrightarrow \dfrac{E^S_{T_0}}{E^S_{T}(E^S_{T_0})^p} \twoheadrightarrow \dfrac{E^S_{T_0}(\K^\times)^p}{E^S_{T}(\K^\times)^p}\twoheadrightarrow \dfrac{E^S_{T_0}(\K^\times)^p}{\E^S_{T_0,v}(\K^\times)^p}\cdot\end{eqnarray}
Then we define
$$\left(\dfrac{E^S_{T_0}(\K^\times)^p}{\E^S_{T_0,v}(\K^\times)^p}\right)^{\lor}=\langle \overline{\sigma}_v \rangle,$$ where $\overline{\sigma}_v$  is the restriction of the Frobenius  $\sigma_v \in \Gamma^S$ to $\F_{T_0}^S$. By the induction hypothesis, $$\left( \dfrac{E^S(\K^\times)^p}{E^S_{T_0}(\K^\times)^p}\right)^{\lor}=\langle \sigma_{v'} , \, v'\in T_0\rangle \cdot$$
But $\overline{\sigma}_v=1$ would imply  $\sigma_v \in \langle \sigma_{v'} , \, v'\in T_0\rangle $ which contradicts the assumption. Therefore the surjection in (\ref{se1}) are isomorphisms, and $\Gal(\F^S/\F_T^S)$ is generated by the Frobenius elements $\sigma_{v}$ and $\sigma_{v'}$, $v'\in T_0$. This concludes the proof.
\end{proof}

\begin{rema} 
The Galois group $\Gal\left(\F^S/\F^S_{\lbrace v\rbrace}\right)$  may not be generated by the Frobenius at $v$. Let's give an example.

Take $\K=\Q$ and $p=2$. Choose $T=\lbrace \ell \rbrace$, where $\ell \equiv 1 (\mod \ 4)$  is a prime number.
    
    Let  $S=S_\infty=\{v_\infty\}$. 
We have $E^S=\langle \pm 1\rangle$ and  $E^S_{\lbrace \ell\rbrace}=\langle 1\rangle$. Thus
$$\F^S=\Q(\sqrt{E^S})=\Q(\sqrt{-1}), \text{ and }
\F^S_{\lbrace \ell\rbrace}=\Q(\sqrt{E^S_{\lbrace \ell\rbrace}})=\Q\,.$$
However, since 
 $\ell$ is splitting in $\Q(\sqrt{-1})/\Q$, it follows that  $\sigma_\ell=1$. Consequently, 
  $\Gal(\Q(\sqrt{-1})/\Q)$ is not generated by the Frobenius at $ \ell$.
\end{rema}

When $p=2$  we can handle the archimedean places in the same way. 
Set  $\SS:=S_0\cup Pl_{\K,\infty}^{re}$. 
Observe that  $E^\SS$ is the group of $S_0$-units in the classical sense (with no sign condition). 


\begin{prop}
    Take $p=2$. Set $H^{S_\infty}=\langle  \sigma_{v};\, v\in pl_{\K,\infty}^{re}\backslash S_\infty \rangle \subset \Gamma^\SS$ and identify  $H_{S_\infty}$ with its restriction to  $\Gamma^\SS_T$.
    Then we have  $$\Gamma_T^S:=\Gal(\K(\sqrt{E^{S}_T})/\K)\simeq \Gamma^{\SS}_T/H_{S_\infty}\cdot$$
   Moreover, if the set  $\lbrace \sigma_v,\, v\in T \rbrace$ forms a lineraly independent family in $\Gamma^{\SS}$, then
    $$\Gal(\K(\sqrt{E^{S}_T})/\K)\simeq \Gamma^{\SS}/\left(H_{S_\infty}+ H_T\right)\cdot$$
\end{prop}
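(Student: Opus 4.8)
The plan is to run the reduction argument of Proposition \ref{proposition_reduction_T} one more time, now using the real places of $Pl_{\K,\infty}^{re}\setminus S_\infty$ in the role that the finite set $T$ played there, and then to splice the two reductions together. Since $p=2$ we have $\zeta_2=-1\in\K$, so $\K'=\K$ and every governing field in sight is of the form $\K(\sqrt{\cdot})$; the whole argument takes place inside $\F^\SS_T=\K(\sqrt{E^\SS_T})$.

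First I would record the precise inclusion of unit groups. Unwinding the definitions, $E^S_T$ consists of exactly those $\varepsilon\in E^\SS_T$ with $\iota_v(\varepsilon)>0$ for every $v\in Pl_{\K,\infty}^{re}\setminus S_\infty$; that is, $E^S_T$ is the kernel of the signature map $E^\SS_T\to (\FF_2)^{Pl_{\K,\infty}^{re}\setminus S_\infty}$, $\varepsilon\mapsto (\sign(\iota_v(\varepsilon)))_v$. In particular $E^S_T\subset E^\SS_T$, so $\F^S_T\subset\F^\SS_T$ and restriction gives a surjection $\Gamma^\SS_T\twoheadrightarrow\Gamma^S_T$ whose kernel is $\Gal(\F^\SS_T/\F^S_T)$.

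The heart of the matter is to identify this kernel with $H_{S_\infty}$. One inclusion is immediate: for $v\in Pl_{\K,\infty}^{re}\setminus S_\infty$ the Frobenius acts by $\sigma_v(\sqrt{\varepsilon})=\sign(\iota_v(\varepsilon))\sqrt{\varepsilon}$ (case $(iv)$ of \S\ref{section_frob}), so it fixes $\sqrt{\varepsilon}$ for every $\varepsilon\in E^S_T$ and hence fixes $\F^S_T$; thus $H_{S_\infty}\subseteq\Gal(\F^\SS_T/\F^S_T)$. For the reverse inclusion I would argue by Kummer duality. An element $g\in\Gamma^\SS_T$ lies in the kernel iff its Kummer character $\psi_g\colon E^\SS_T\to\mu_2$, defined by $g(\sqrt\varepsilon)=\psi_g(\varepsilon)\sqrt\varepsilon$, is trivial on $E^S_T$, i.e. iff $\psi_g$ factors through $E^\SS_T/E^S_T$. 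By the previous paragraph the signature map embeds $E^\SS_T/E^S_T$ into $(\FF_2)^{Pl_{\K,\infty}^{re}\setminus S_\infty}$, and every $\FF_2$-character of a subspace is the restriction of a coordinate-sum functional on the ambient space; such a functional is $\varepsilon\mapsto\prod_{v\in I}\sign(\iota_v(\varepsilon))$ for some subset $I$, which is exactly the character attached to $\prod_{v\in I}\sigma_v\in H_{S_\infty}$. Non-degeneracy of the Kummer pairing then forces $g=\prod_{v\in I}\sigma_v\in H_{S_\infty}$. Hence $\Gal(\F^\SS_T/\F^S_T)=H_{S_\infty}$, and the first isomorphism $\Gamma^S_T\simeq\Gamma^\SS_T/H_{S_\infty}$ follows.

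For the \emph{moreover} part I would invoke Proposition \ref{proposition_reduction_T} with $\SS$ in place of $S$: the freeness of $\lbrace\sigma_v,\,v\in T\rbrace$ in $\Gamma^\SS$ yields $\Gamma^\SS_T\simeq\Gamma^\SS/H_T$, and under the restriction $\Gamma^\SS\twoheadrightarrow\Gamma^\SS_T$ the subgroup $H_{S_\infty}$ of the first part is the image of $H^{S_\infty}\subset\Gamma^\SS$. Combining with the first part and applying the third isomorphism theorem gives
$$\Gamma^S_T\simeq\Gamma^\SS_T/H_{S_\infty}\simeq(\Gamma^\SS/H_T)\big/\overline{H^{S_\infty}}\simeq\Gamma^\SS/(H_{S_\infty}+H_T),$$
as claimed, where $\overline{H^{S_\infty}}$ denotes the image of $H^{S_\infty}$ in $\Gamma^\SS/H_T$. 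The main obstacle is the reverse inclusion $\Gal(\F^\SS_T/\F^S_T)\subseteq H_{S_\infty}$: it is exactly here that one needs the signature map to be injective on $E^\SS_T/E^S_T$ together with the extension property of $\FF_2$-characters, and this is the step that genuinely uses the structure of the real places rather than a formal manipulation.
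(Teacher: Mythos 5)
Your proposal is correct and takes essentially the same route as the paper: the paper's proof likewise consists of identifying $\Gal(\F_T^\SS/\F_T^S)$ with $H_{S_\infty}$ via Kummer--Galois theory (stated as ``similar to Lemma \ref{lemm_dec}'') and then quoting Proposition \ref{proposition_reduction_T} to get the second isomorphism. Your character-extension argument is just the dual-side formulation of the Galois-correspondence argument sketched in Lemma \ref{lemm_dec}, with the details the paper leaves implicit (the signature-kernel description of $E_T^S$, the inclusion $E_T^\SS\cap(\K^\times)^2\subset E_T^S$, and non-degeneracy of the Kummer pairing) written out.
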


\begin{proof} Similar to Lemma \ref{lemm_dec}, we can show that $\K(\sqrt{E^{S}_T})$ corresponds, by Galois theory, to the subgroup   $H_{S_\infty}$ of $\Gamma^\SS_T$.

As for the second part, from Proposition \ref{proposition_reduction_T} we know that $\Gamma_T^\SS \simeq \Gamma^\SS/H_T$; thus, we conclude with the first point.
\end{proof}

 
\section{Main result}\label{section_resultatprincipal}
We keep the notations from the previous sections. In particular,  $\Sigma'=\Sigma \backslash \Sigma \cap (S\cup T)$.

\medskip

For $v\in \Sigma'\cup S$, let's consider the elements $g_v:=\varphi_v(\chi_v) \in \Gamma_T^S$ defined in  $(i)-(iv)$  of \S \ref{section_frob}. Recall that if $v$ is unramified in $\F_S^T/\K'$, then $g_v=\sigma_v$ is the  Frobenius of $v$ in $\Gamma_T^S$.

Let $\Theta^S_{\Sigma,T}$ be the following linear map: 
$$\Theta_{\Sigma,T}^S\,:\,\left(\dfrac{U_{\K,\Sigma'}^S}{ W}\right)^{\lor}\longrightarrow \Gamma_T^S. $$
defined by  $\Theta_{\Sigma,T}^S(\chi_v)=g_v$.

\begin{theo}\label{theo2}
The Artin map induces the following isomorphism:
$$\ker(\Theta^S_{\Sigma,T})\simeq \Gal(\K_{\Sigma, S,T}/\K^S_{T})^{\lor}.$$
\end{theo}

\begin{proof} First, let's observe  that
$$E^S\cap (\K^\times)^p=(E^\SS)^p,$$
where $E^\SS$ is the group of $S_0$-units in the classical sense. Then
$$E_T^S (E^\SS)^p/(E^\SS)^p \simeq E_T^S (\K^\times)^p/(\K^\times)^p\cdot$$ 
Now, consider the exact sequence obtained from Proposition~\ref{proposition1} and Remark~\ref{rema0}:
$$1\longrightarrow \nu (E^S _T(E^\SS)^p/(E^\SS)^p)\longrightarrow U^S _{\K,\Sigma'} / W \longrightarrow \Gal(\K_{\Sigma, S,T}/\K^S_{T})\longrightarrow 1\cdot$$
By Kummer duality, we have:
$$
\xymatrix{
 1 \ar[r] & \Gal(\K_{\Sigma\cup S}/\K^S_{T})^{\lor} \ar[r] & \left( U^S_{\K,\Sigma'} / W \right)^{\lor}\ar@{.>}[d] \ar[r] & \left(\nu (E^S_T(E^\SS)^p/(E^\SS)^p)\right)^{\lor}  \ar@{^{(}->}[d]\ar[r] & 1\\
  &  &\Gamma_T^S &\ar[l]_--{\Psi}^--{\simeq} \left( E^S_T(E^\SS)^p/(E^\SS)^p \right)^{\lor}
  }
$$
Now, $$\left( U^S_{\K,\Sigma'} / W \right)^{\lor}\simeq \prod_{v\in \Sigma'}(U_v/W_v)^\lor \prod_{v\in S}(U_v/W_v)^\lor .$$
Then, it suffices to observe that the induced map from  $\left( U^S_{\K,\Sigma'} / W \right)^{\lor}$ to $\Gamma_T^S$ corresponds to $\Theta_{\Sigma,T}^S$. Therefore, we finally obtain:
$$\Gal(\K_{\Sigma, S,T}/\K^S_{T})^{\lor}\simeq \ker\left(\left( U^S_{\K,\Sigma'} / W  \right)^{\lor}\stackrel{\Theta_{\Sigma,T}^S}{\longrightarrow} \Gamma_T^S \right).$$
Hence the result.
\end{proof}

Therefore, it follows that
\begin{coro}
  We have $g^S_T=\#\ker (\Theta_{\Sigma,T}^S)$.
\end{coro}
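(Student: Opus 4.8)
The plan is to derive the corollary as a direct numerical consequence of Theorem~\ref{theo2}, reading off cardinalities from the isomorphism established there. First I would recall that by Theorem~\ref{theo2}, the Artin map induces an isomorphism
$$\ker(\Theta^S_{\Sigma,T})\simeq \Gal(\K_{\Sigma, S,T}/\K^S_{T})^{\lor}.$$
Since $\Gamma_T^S$ is an $\FF_p$-elementary abelian group (by Theorem~\ref{theo genres}, the relevant Galois group has exponent $p$), every object in sight is a finite $\FF_p$-vector space, and Pontryagin/Kummer duality over $\FF_p$ preserves cardinality. Hence I would immediately pass to cardinalities:
$$\#\ker(\Theta^S_{\Sigma,T})=\#\Gal(\K_{\Sigma, S,T}/\K^S_{T})^{\lor}=\#\Gal(\K_{\Sigma, S,T}/\K^S_{T}).$$

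The second step is to identify the right-hand group with the genus number. By Proposition~\ref{proposition1} we have the equality of fields $\M=\K_{\Sigma,S,T}$, so that $\Gal(\K_{\Sigma,S,T}/\K^S_T)=\Gal(\M/\K^S_T)$. By the very definition of the $S$-$T$ genus number given in the introduction, $g^S_T=[\M:\K_T^S]=\#\Gal(\M/\K_T^S)$. Chaining these equalities yields
$$g^S_T=\#\Gal(\M/\K^S_{T})=\#\Gal(\K_{\Sigma,S,T}/\K^S_{T})=\#\ker(\Theta_{\Sigma,T}^S),$$
which is exactly the claim.

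In short, the corollary is not a new result but the arithmetic extraction from the categorical statement of Theorem~\ref{theo2}: one takes the order of both sides of the displayed isomorphism and uses that dualizing a finite $\FF_p$-module leaves its order unchanged, then substitutes $\M=\K_{\Sigma,S,T}$ from Proposition~\ref{proposition1} and the definition of $g^S_T$. I do not expect any genuine obstacle here, since all the structural work has already been done; the only point requiring a word of justification is that $\Gal(\M/\K_T^S)$ is a finite $\FF_p$-vector space so that it is (non-canonically) isomorphic to its own dual, and this is guaranteed by Theorem~\ref{theo genres}, which asserts that this Galois group has exponent $p$. The proof is therefore essentially the single line $g^S_T=\#\ker(\Theta_{\Sigma,T}^S)$ with the above references supplied.
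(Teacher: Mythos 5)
Your proof is correct and follows exactly the paper's route: the paper's own proof is the one-line remark that the corollary follows from Theorem~\ref{theo2} and Proposition~\ref{proposition1}, and you have simply made explicit the intermediate steps (duality preserves the order of a finite $\FF_p$-module, $\M=\K_{\Sigma,S,T}$, and the definition $g_T^S=[\M:\K_T^S]$). No gaps; your justification via Theorem~\ref{theo genres} that $\Gal(\M/\K_T^S)$ has exponent $p$ is precisely the point that makes the cardinality count through the dual legitimate.
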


\begin{proof} This is a consequence of Theorem~\ref{theo2} and Proposition~\ref{proposition1}.
\end{proof}

It follows that if $v\in S$ splits in $\L/\K$, then the component at $v$ in $\dfrac{U_{\K,\Sigma'}^S}{ W}$ is trivial. Set  $S=S^{sp}\cup S^{ns}$, where  $S^{sp}$ is the set of places in $S$ that split in  $\L/\K$, and  $S^{ns}=S\backslash S^{sp}$. Let $s^{ns}=\#S^{ns}$ and $m:=\# \Sigma$. 

Then $\left(\dfrac{U_{\K,\Sigma'}^S}{ W}\right)^{\lor}$ is isomorphic to $(\Z/p)^{s^{ns}+m}$. 

\begin{coro} \label{corollaire rang}
    We have $g^S_T=\#\ker (\Theta_{\Sigma,T}^S)$. En particular 
    $$m+s^{ns}-r_T^S \leq log_p(g_S^T)\leq m+s^{ns},$$
     where $r_T^S$ is the $p$-rank of $E_T^S$. 
\end{coro}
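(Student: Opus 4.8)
The plan is to read off both inequalities from the rank--nullity theorem applied to the $\FF_p$-linear map $\Theta_{\Sigma,T}^S$, once the identity $g_T^S=\#\ker(\Theta_{\Sigma,T}^S)$ of the preceding corollary is in hand. By the isomorphism recorded just above the statement, the source $\left(\frac{U_{\K,\Sigma'}^S}{W}\right)^{\lor}$ is an $\FF_p$-vector space of dimension $m+s^{ns}$, so
$$\log_p(g_T^S)=\dim_{\FF_p}\ker(\Theta_{\Sigma,T}^S)=(m+s^{ns})-\mathrm{rank}(\Theta_{\Sigma,T}^S).$$
Both bounds thus amount to controlling $\mathrm{rank}(\Theta_{\Sigma,T}^S)$: the upper bound $\log_p(g_T^S)\le m+s^{ns}$ is the trivial inequality $\mathrm{rank}(\Theta_{\Sigma,T}^S)\ge 0$, while the lower bound requires $\mathrm{rank}(\Theta_{\Sigma,T}^S)\le r_T^S$.

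For the remaining inequality I would use that the image of $\Theta_{\Sigma,T}^S$ sits inside $\Gamma_T^S=\Gal(\F_T^S/\K')$, so $\mathrm{rank}(\Theta_{\Sigma,T}^S)\le\dim_{\FF_p}\Gamma_T^S$, and then bound $\dim_{\FF_p}\Gamma_T^S$ by $r_T^S$. Here the Kummer-theoretic description of the governing field is decisive: since $\F_T^S=\K'(\sqrt[p]{E_T^S})$ and $[\K':\K]$ is coprime to $p$, the identifications at the opening of Section~\ref{section_gouvernant} give
$$\dim_{\FF_p}\Gamma_T^S=\dim_{\FF_p}\big(E_T^S(\K^\times)^p/(\K^\times)^p\big)=\dim_{\FF_p}\big(E_T^S/(E_T^S\cap(\K^\times)^p)\big).$$
Because $(E_T^S)^p\subseteq E_T^S\cap(\K^\times)^p$, this last dimension is at most $\dim_{\FF_p}\big(E_T^S/(E_T^S)^p\big)=r_T^S$, the $p$-rank of $E_T^S$; hence $\mathrm{rank}(\Theta_{\Sigma,T}^S)\le r_T^S$ and $\log_p(g_T^S)\ge m+s^{ns}-r_T^S$.

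The step I expect to be the most delicate is the chain $\dim_{\FF_p}\Gamma_T^S\le r_T^S$: it requires matching the Galois group of the governing extension with the mod-$p$ reduction of the $S$-unit group and keeping track of the (harmless) gap between $(E_T^S)^p$ and $E_T^S\cap(\K^\times)^p$, which only strengthens the inequality. Everything else is formal linear algebra over $\FF_p$, so no further technical difficulty is anticipated.
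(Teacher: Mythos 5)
Your proposal is correct and takes essentially the same route as the paper: the paper's one-line proof is precisely the observation $\dim \Gamma_T^S = \dim E_T^S(\K^\times)^p/(\K^\times)^p \le r_T^S$, combined implicitly with rank--nullity on the source $\left(\dfrac{U_{\K,\Sigma'}^S}{W}\right)^{\lor} \simeq (\FF_p)^{m+s^{ns}}$, which is exactly the chain you spell out. Your added details (the Kummer-duality identification using $[\K':\K]$ coprime to $p$, and the inclusion $(E_T^S)^p \subseteq E_T^S \cap (\K^\times)^p$) are simply a fuller writing of the same argument.
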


\begin{proof}  Il suffices to note that  $\dim \Gamma_T^S = \dim E_T^S(\K^\times)^2/(\K^\times)^2 \leq  r_T^S$.
\end{proof}

\begin{rema}
 We have $\dim \Gamma_T^S \leq r_{S_0}$,  where $r_{S_0}=r_1+r_2+|S_0|-\delta_{\K,p}$. 
 
 When the Frobenius elements of the places
  $v \in T$ are linearly independent in $\Gamma^S$, we also have 
$\dim \Gamma_T^S=\dim \Gamma^S - |T| \leq r_{S_0}-|T|$. (See Proposition \ref{proposition_reduction_T}.)
\end{rema}

\begin{coro}[Theorem \ref{Theo1}] \label{corollaire_frobenius}
If $S^{ns}=\Sigma \cap Pl_{\K,p}=\emptyset$, let
$\{e_{v_1},\ldots , e_{v_{m}}\}$ be a basis of $(\FF_p)^{m}$ indexed by the places $v$ in $\Sigma$, and let $\Theta$ be the linear map defined by: 
$$
\begin{array}{rcll}
\Theta \, : \, (\FF_p)^{m}& \longrightarrow  & \Gamma_T^S \\
  e_{v}  & \longmapsto  & \sigma_{v} .
\end{array}$$
Then $g_S^T=\# \ker(\Theta)$.
\end{coro}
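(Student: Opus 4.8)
The plan is to obtain this statement as a specialization of Corollary \ref{corollaire rang} (equivalently Theorem \ref{theo2}), which already delivers the identity $g_T^S=\#\ker(\Theta_{\Sigma,T}^S)$ for the Kummer-theoretic map $\Theta_{\Sigma,T}^S\colon \left(U_{\K,\Sigma'}^S/W\right)^{\lor}\to\Gamma_T^S$. All that remains is to check that, under the two hypotheses $S^{ns}=\emptyset$ and $\Sigma\cap Pl_{\K,p}=\emptyset$, this map is literally the Frobenius map $\Theta\colon(\FF_p)^m\to\Gamma_T^S$, $e_v\mapsto\sigma_v$; the equality of the kernels is then immediate.

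First I would simplify the source. Since a place that splits in $\L/\K$ is in particular unramified, the assumption $S^{ns}=\emptyset$ forces $\Sigma\cap S=\emptyset$; as $\Sigma$ is disjoint from $T$ in the present setting, this gives $\Sigma'=\Sigma$, so $m=\#\Sigma'$. Moreover, for each $v\in S$ one has $W_v=\K_v^\times$, so the factor $(U_v/W_v)^{\lor}$ is trivial, as recorded just after Corollary \ref{corollaire rang}. Hence
$$\left(U_{\K,\Sigma'}^S/W\right)^{\lor}\simeq\prod_{v\in\Sigma'}(U_v/W_v)^{\lor},$$
and each factor is a one-dimensional $\FF_p$-space generated by $\chi_v$; identifying $\chi_v$ with $e_v$ identifies the source of $\Theta_{\Sigma,T}^S$ with $(\FF_p)^m$.

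Next I would identify the map itself. The hypothesis $\Sigma\cap Pl_{\K,p}=\emptyset$ says that every $v\in\Sigma'=\Sigma$ satisfies $v\notin Pl_{\K,p}\cup S_0$, which is exactly the situation of case $(i)$ of \S\ref{section_frob}: such a $v$ is unramified in $\F_T^S/\K'$, one has $W_v=U_v^p$, the decomposition group $D_v$ is cyclic generated by the Frobenius $\sigma_v$, and after the normalization of $\chi_v$ carried out there, $g_v=\varphi_v(\chi_v)=\sigma_v$. Therefore $\Theta_{\Sigma,T}^S(\chi_v)=g_v=\sigma_v=\Theta(e_v)$ for every $v\in\Sigma$, so $\Theta_{\Sigma,T}^S=\Theta$ under the above identification, and thus $\#\ker(\Theta)=\#\ker(\Theta_{\Sigma,T}^S)=g_T^S$ by Corollary \ref{corollaire rang}.

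The substantive content has already been absorbed into Theorem \ref{theo2}, so no genuine obstacle remains; the only points demanding care are the two normalizations. One must ensure that the generator $\chi_v$ of $(U_v/W_v)^{\lor}$ is chosen so that $\varphi_v(\chi_v)$ is the Frobenius $\sigma_v$ itself and not merely a nontrivial power of it (this is precisely the ``suitable power'' adjustment of case $(i)$), and one must keep in mind that if a ramified prime were to lie in $T$ its Frobenius in $\Gamma_T^S$ would be trivial, since $E_T^S\subset(\K_v^\times)^p$ by Hensel's lemma for such $v$. Consequently the clean count $g_T^S=\#\ker(\Theta)$ genuinely relies on $\Sigma\cap T=\emptyset$, which is why we work throughout with $\Sigma=\Sigma'$. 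Independence of all these constructions from the choice of $w\mid v$ having already been established, the final identification is unambiguous.
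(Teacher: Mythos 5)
Your proof is correct and takes essentially the same route as the paper: the paper's own proof of this corollary is the single line ``In this case, $g_v=\sigma_v$'', i.e.\ it too specializes Theorem \ref{theo2} (via Corollary \ref{corollaire rang}) and invokes case $(i)$ of \S\ref{section_frob}. Your extra details --- triviality of the $S$-factors when $S^{ns}=\emptyset$, the normalization of $\chi_v$ so that $\varphi_v(\chi_v)=\sigma_v$ exactly, and the caveat that one needs $\Sigma\cap T=\emptyset$ (hence $\Sigma'=\Sigma$) for the count to come out right --- simply make explicit what the paper leaves implicit.
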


\begin{proof}
 In this case, $g_v=\sigma_v$.
\end{proof}


\section{Examples} 

\subsection{Quadratic extensions} Let's take $p=2$  and $\K=\Q$. Let $\L/\Q$ be a quadratic extension with set of ramification  $\Sigma=\{p_1,\cdots, p_m\}$. Set $\L=\Q(\sqrt{d})$, where $d\in \Z$ is square-free.

\smallskip

Let  $S_0=\lbrace \ell_1,\ldots ,\ell_s \rbrace$. We assume that $\Sigma \cap S=\emptyset$.

We denote $\ell_\infty$ as the infinite place; then $S_\infty=\lbrace \ell_\infty \rbrace$ or $S_\infty=\emptyset$. 
In the spirit of Proposition \ref{proposition_reduction_T}, we assume $T=\emptyset$.

\smallskip

Let $E^S$ be the group of $S$-units of $\Q$. We write
    $E^S=\langle\ell_0,\ldots ,\ell_s\rangle$,  with $\ell_0=-1$  or $1$ depending on whether $S_\infty=\{\ell_\infty\}$ or not.

\smallskip
    
    In this context, the governing field is written as $\F^S=\Q(\sqrt{E^S})=\Q(\sqrt{\ell_0},\ldots , \sqrt{\ell_s})$. 
    Its Galois group $\Gamma^S:=\Gal(\F^S/\Q)$ is isomorphic to $\prod_{j=0}^s \Gal(\Q(\sqrt{\ell_j})/\Q)$.
Note that $\Gal(\Q(\sqrt{\ell_0})/\Q)$ may be trivial.

\smallskip

Let's revisit the morphism $\varphi_v$ defined in Section \S \ref{section_frob} and consider its restriction to $\Q(\sqrt{\ell_j})$: its value is in $\{0,1\}$.  For what follows, the quadratic residue symbol is viewed additively, meaning it takes values in $\FF_2$.

\begin{lemm} \label{prop_quad}
The elements $g_\ell$ takes the following values:

$(a)$ For $\ell \in  \Sigma'$ and  $\ell$  odd,  $g_\ell=\sigma_\ell$ restricts to $\Q(\sqrt{\ell_j}) $ equals  $\left(\dfrac{\ell_j}{\ell}\right)$. 

$(b)$  For $\ell \in S_0^{ns}\backslash S_0^{ns}\cap \Sigma$,
  $g_\ell$  restricts  to  $\Q(\sqrt{\ell_j}) $  is trivial if and only if $ \ell \neq \ell_j$.

$(c)$  For $\ell=\ell_\infty$, 
the element  $g_{\ell_\infty}$  restricts  to  $\Q(\sqrt{\ell_j}) $  is trivial unless  $\ell_j=\ell_0=-1$ and $\L$ is imaginary.
\end{lemm}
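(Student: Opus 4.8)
The plan is to verify each of the three cases $(a)$, $(b)$, $(c)$ by unwinding the explicit descriptions of $\varphi_v$ given in items $(i)$--$(iv)$ of \S\ref{section_frob}, specialised to $\K=\Q$, $p=2$, and then restricting the resulting element $g_\ell\in\Gamma^S$ to each quadratic subfield $\Q(\sqrt{\ell_j})$. The starting observation is that, since $\Gamma^S=\Gal(\F^S/\Q)\simeq\prod_{j=0}^s\Gal(\Q(\sqrt{\ell_j})/\Q)$, an element of $\Gamma^S$ is completely determined by its restrictions to the $\Q(\sqrt{\ell_j})$, and each such restriction is an element of $\{0,1\}=\FF_2$; so it suffices to compute $g_\ell|_{\Q(\sqrt{\ell_j})}$ for every pair $(\ell,j)$. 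The common mechanism throughout is the defining formula $g_\ell(\sqrt{\ell_j})=\chi_\ell(\iota_\ell(\ell_j))\cdot\sqrt{\ell_j}$, so that the restriction to $\Q(\sqrt{\ell_j})$ is nontrivial precisely when $\chi_\ell(\iota_\ell(\ell_j))=-1$, i.e.\ equals the additive value $\chi_\ell(\iota_\ell(\ell_j))\in\FF_2$.

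For case $(a)$, $\ell\in\Sigma'$ is odd, so by item $(i)$ the place $\ell$ is unramified in $\F^S/\Q$ and $g_\ell=\sigma_\ell$ is the Frobenius at $\ell$. First I would recall that the Frobenius of an odd prime $\ell$ in $\Q(\sqrt{\ell_j})/\Q$ is trivial exactly when $\ell_j$ is a square modulo $\ell$, i.e.\ its restriction is governed by the quadratic residue symbol $\left(\frac{\ell_j}{\ell}\right)$ in its additive ($\FF_2$-valued) form. This is the standard identification of $\sigma_\ell|_{\Q(\sqrt{\ell_j})}$ with the Legendre symbol, which is exactly the asserted value; one must only check that $\ell\neq 2$ (guaranteed by oddness and $\ell\in\Sigma'$) so that $\ell$ is genuinely unramified in $\Q(\sqrt{\ell_j})$, and that the possible case $\ell_j=\ell_0\in\{-1,1\}$ is covered by reading $\left(\frac{\ell_0}{\ell}\right)$ correctly.

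For case $(b)$, $\ell\in S_0^{ns}\setminus(S_0^{ns}\cap\Sigma)$, so $\ell$ lies in $S_0$ and is unramified in $\L/\Q$; here I would invoke item $(ii)$, where $\chi_\ell$ is the character detecting the $\ell$-valuation modulo $2$, namely $\chi_\ell(\iota_\ell(\varepsilon))$ is trivial iff $v_\ell(\varepsilon)\equiv 0\ (\mathrm{mod}\ 2)$. Applying this to $\varepsilon=\ell_j$, the valuation $v_\ell(\ell_j)$ is $1$ when $\ell_j=\ell$ and $0$ otherwise (since the $\ell_j$ are distinct primes, and $\ell_0\in\{\pm1\}$ is an $\ell$-adic unit), which gives exactly the stated dichotomy: the restriction to $\Q(\sqrt{\ell_j})$ is trivial iff $\ell\neq\ell_j$. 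For case $(c)$, $\ell=\ell_\infty$ is the real place, and by item $(iv)$ the character is the sign character, so $g_{\ell_\infty}(\sqrt{\ell_j})=\sign(\iota_{\ell_\infty}(\ell_j))\cdot\sqrt{\ell_j}$; the sign of $\ell_j$ is positive for every genuine prime $\ell_j>0$ and also for $\ell_0=1$, and is negative only when $\ell_j=\ell_0=-1$, yielding nontriviality exactly in that last case, under the proviso (from item $(iv)$) that $\ell_\infty$ does not split in $\L/\Q$, i.e.\ that $\L$ is imaginary.

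I expect the only real subtlety — the main obstacle — to be bookkeeping the exceptional index $j=0$ uniformly across the three cases, since $\ell_0\in\{-1,1\}$ is not an honest prime and must be treated by hand (as a unit for the valuation argument in $(b)$, via $\left(\frac{-1}{\ell}\right)$ or $\left(\frac{1}{\ell}\right)$ in $(a)$, and as the carrier of the genuine sign in $(c)$); and in correctly matching the additive normalisation of the quadratic symbol and the $\{0,1\}$-valued restrictions so that the identifications are literal equalities in $\FF_2$ rather than merely up to the choice of generator $\chi_\ell$. The independence from the choice of place $w\mid v$ of $\K'=\Q(\sqrt{-1})$ is already guaranteed by the lemma preceding this statement, so I would not reprove it.
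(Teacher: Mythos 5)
Your proof is correct and takes essentially the same route as the paper: the paper's own proof is literally the one-line citation ``$(a)$ is $(i)$ of \S\ref{section_frob}, $(b)$ is $(ii)$ and $(c)$ is $(iv)$'', which you have simply unwound in detail (Frobenius/Legendre symbol for $(a)$, valuation character for $(b)$, sign character for $(c)$). One tiny slip in your closing remark: for $p=2$ one has $\K'=\K(\zeta_2)=\Q$, not $\Q(\sqrt{-1})$, which makes the independence of the choice of place $w\mid v$ even more immediate than you claim.
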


\begin{proof} 
 $(a)$ is $(i)$ of \S \ref{section_frob}, $(b)$ is $(ii)$ and $(c)$ is $(iv)$.\end{proof}
 
 It remains to describe   $g_2$ when $2$ is ramified in $\L/\Q$. So, suppose  $2 \in \Sigma$.
 We identify $g_2$ with its restriction to $\Gal(\Q(\sqrt{\ell_i})/\Q)$. We have the following extensions 
 $$\xymatrix{&&\Q_2(\sqrt{A_2}) \\
\Q_2(\sqrt{W_2})\ar@{-}[urr]^{\langle x \rangle}&&\Q_2(\sqrt{\ell_i})\ar@{-}[u] \ar@{.}@/^1pc/[ddl]\\
&\k_2\ar@{-}[ul]\ar@{-}[ur]& \\
&\Q_2\ar@{-}[u]&
}
$$
Recall that $A_2=U_2$ (respectively $A_2=\Q_2^\times$) if $2\notin S$ (resp. $2\in S$).

The desired element $g_2$ is the image of the reduction of  $x$ in $\Gal(\Q_2(\sqrt{\ell_i})/\Q_2) \hookrightarrow \Gal(\Q(\sqrt{\ell_i})/\Q)$.
Therefore  $g_2$ (restricted) is   trivial if and only if $\ell_i \in W_2 $ modulo $(A_2)^2$.

\smallskip

Let's take for example $\ell_i=2$. 
Observe that for $2\in S$, then $2 \in \W_2$, and thus $g_2=0$. On the other hand, if $2\notin S$, then $W_2 \subset U_2$ and consequently  $g_2=1$.

\smallskip

In general, everything relies on determining $W_2$, which is the conductor at  $2$ of $\L/\K$; see   \cite[Chapiter II,  \S 1, Exercise 1.6.5]{Gras-livre} for calculations. 

For example, suppose $d\equiv -1$ modulo $8$. Then $W_2=\langle 5 \rangle$. Hence, $g_2$ restricted to $\Gal(\Q(\sqrt{\ell_i})/\Q)$ is trivial if and only if, $\ell_i \equiv$ 1  modulo $4$.

\medskip

A particularly noteworthy situation arises when we are only dealing with cases  $(a)$ and  $(b)$ of  Proposition \ref{prop_quad} and  $\L/\Q$ is unramified at  $2$. Let's detail this situation.

\medskip

 Let the canonical basis $\B:= \lbrace e_{p_1},\ldots ,e_{p_m},e_{\ell_0},  e_{\ell_1},\ldots ,e_{\ell_{s}}\rbrace$ of $\FF_2^{m+s+1}$ be indexed by the places of $\Sigma \cup \{\ell_\infty\} \cup {S_0} $; here we take $S_\infty=\{\ell_\infty\}$ and set $e_{\ell_0}:=e_{\ell_\infty}$. 

The map $ \Theta:=\Theta_\Sigma^S $ on the basis  $\B$, taking values in  $\prod_{j=0}^n \Gal(\Q(\sqrt{\ell_j})/\Q)$, is defined by   $$\Theta(e_{p_i})_{|\Q(\sqrt{\ell_j})}= \left(\dfrac{\ell_j}{p_i}\right), \ \ \Theta(e_{\ell_i})_{|\Q(\sqrt{\ell_j})}=\delta_{i,j}^* ,$$ 
where for  $0\leq i\leq s$, $\delta_{i,j}=0$ when $i\neq j$; for $j> 0$, 
$$\delta_{j,j}^*= \left\{\begin{array}{l} 1 \,\, {\rm if } \ \ell_j \ \text{\rm is inert in }\L/\Q \\
    0                          \,  \, {\rm if }  \ \ell_j  \ \text{\rm splits in }\L/\Q                                                                            \end{array}
 \right.$$ 
 and  $\delta_{0,0}^*=1$ if $\L$ is imaginary, $0$ otherwise. 
 
 The matrix $\Theta$ is then written as follows
 
 {\tiny
$$
\left(
\begin{array}{cccccccc}

\left(\dfrac{-1}{p_1}\right) & \left(\dfrac{-1}{p_2}\right) & \cdots & \left(\dfrac{-1}{p_m}\right) & \delta_{0,0}^*& 0 & \cdots & 0\\
\\

\left(\dfrac{\ell_1}{p_1}\right) & \left(\dfrac{\ell_1}{p_2}\right) & \cdots &\left(\dfrac{\ell_1}{p_m}\right) & 0 & \delta_{1,1}^* & \cdots &0\\
\\
\vdots & \vdots &  \ddots & \vdots&\vdots & \vdots & \ddots & \vdots\\
     \\
\left(\dfrac{\ell_s}{p_1}\right) & \left(\dfrac{\ell_s}{p_2}\right) & \cdots &\left(\dfrac{\ell_s}{p_m}\right) & 0 & 0 & \cdots & \delta_{s,s}^*\\
\end{array}
\right) .
$$
}

Observe that if we take $S_\infty=\emptyset$, then to obtain the matrix, simply remove the first row and the $(m+1)$th column  of the above matrix.

\medskip


\subsection{A result of existence}

In this subsection, we make explicit Corollary \ref{corollaire rang} by adapting Theorem 1.3 from \cite{Maire2}. To simplify, we only consider extensions that are splitting at infinity. Thus, we assume that $Pl_{\K,\infty}^{re} \subset S$. Note that for $p>2$, the archimedean places play no role in the  calculations.

Recall that $r_{S}=\dim E^{S}=r_1+r_2+\#S_0-1+\delta_\K$. Let $s=\# S_0$ and define
$A_S:=A_{S_0}:=\prod_{v\in S_{0}}N_v$.

\begin{theo}\label{theorem 1.1}
Let $\K$ be a number field, and let $S=S_0\cup Pl_{\K,\infty}^{re}$ be a set of places of~$\K$. Let $k,m \geq 1$,  such that $m - r_{{S}} \leq k \leq m$. Let $p$  be a prime number.  Then there exist infinitely many sets  $\Sigma$ of finite places of  $\K$ of size  $m$, such that there exists an extension  $\L/\K$ cyclic of degree  $p$, $\Sigma$-(totally) ramified, $S$-split, and with $g^{S}:=g_\emptyset^S=p^k$.

Moreover, assuming GRH,  when $m$ is fixed, such a set $\Sigma $ can be chosen such that the absolute norm $N_v$
of each of its elements $v$ is smaller than \ $p^{2r_{S} + 2} (c_1 \log\,p + c_2 \log A_S)^2$, where $c_1$ and $c_2$ are constants depending on $\K$ and $m$.
\end{theo}

\begin{proof}  The proof relies on \cite[Section 4, Proof of Theorem 1.3]{Maire2}, with two modifications due to $S$. The first modification concerns the "existence" part, and the second concerns the "quantitative" part.
    
    \smallskip
    
    $\bullet$  We need the existence of a cyclic extension  $\L/\K$ of degree $p$ with specific properties. For the first part, we utilize the Gras-Munnier theorem with splitting (see \cite[Chapter V, Section 2, Corollary 2.4.2]{Gras-livre} and \cite{GM}).

\smallskip

Let $V_S:=\{x\in \K^\times, v(x)=0, \ \forall v \notin S\}$ be the $S$-Selmer group of $\K$. 
Set $\widetilde{\F}^{S}:=\K'(\sqrt[p]{V_S})$. The exact sequence
$$1  \longrightarrow E^S/(E^S)^p \longrightarrow V_S/(\K^\times)^p \longrightarrow \Cl^S_\K[p] \longrightarrow 1$$ shows that $[\widetilde{\F}^{S}:\F^S]=O(1)$, uniformly for $p$ and $S$.

\smallskip

Let $k$ and $m$ be non-negative integers such that $m  - r_{S} \leq k \leq m$. Set $r=m-k \leq r_S$.
Let $p^l$ be the degree of $\widetilde{\F}^{S}/\F^S$. 

\smallskip

Let us take an $\FF_p$-basis $(e_i)_{i=1,\ldots , r_{S}}$ of $\Gal({\F}^{S}/\K')$ and complete it to an $\FF_p$-basis $(e_i)_{i=1,\ldots , r_{S}+l}$ of $\Gal(\widetilde{\F}^{S}/\K')$.
By Chebotarev's density theorem, let $\Sigma=\{v_1,\ldots , v_m\}$ be  places of  $\K$ prime to $p$ such that their Frobenius elements, denoted  $\sigma_{v_i}\in\Gal(\widetilde{\F}_{S}/\K')\subset\Gal(\widetilde{\F}_{S}/\K)$, satisfy:
\begin{itemize}

\item[$(a)$] $\sigma_{v_1}=-(e_1+ \cdots + e_r)$;

\item[$(b)$] for $i=2,\cdots ,r+1,\,\sigma_{v_i}=e_{i-1}$;

\item[$(c)$] for $i=r+2, \cdots , m, \,\, \sigma_{v_i}=0,$

\end{itemize}
when $r\geq 1$. When $r=0$, choose the  $v_i$'s such that  $\sigma_{v_i}=0, \,\, i=1,\cdots , m$.

\smallskip

One has $\displaystyle{\sum_{i=1}^m \sigma_{v_i}=0}$: By the Gras-Munnier theorem referenced earlier, there exists a cyclic extension $\L/\K$ of degree $p$, $\Sigma$-totally ramified and $S$-split.  Furthemore by the choice  of the  $e_i$'s and the $v_i$'s,  the map $\Theta$ of Corollary \ref{corollaire_frobenius} has rank $r$. Hence, $\Gal(\M/\K^{S})\simeq (\FF_p)^{m-r}=(\FF_p)^{k}$.

\smallskip

$\bullet$  We now need a second modification: it concerns the discriminant  $\disc(\widetilde{\F}^S)$ of $\widetilde{\F}^S$.
The introduction of  $S$ can affect   $\disc(\widetilde{\F}^S)$ of $ \widetilde{\F}^S$ compared to  $\disc(\widetilde{\F}^\emptyset)$ in two different ways.  Firstly,  the places prime to $p$ contained in $S$,  may become ramified, and for the places $w$ of $\K'$ in $S$ lying above $p$, the valuation of the local conductor $\f_w$ in a cyclic extension  $\L_w/\K_w'$ of degree $p$ may increase. Let $e_w$ be the index of ramification of $w$ in $\K'/\Q$. Observe now that 
\begin{itemize}
 \item[$-$] 
for $w|p$,  ${w(\f_w)\leq 1+ \frac{p}{p-1} e_w}$,
\item[$-$] for $w\nmid p$, $w(\f_w) \leq p-1$.
\end{itemize}
See for example \cite[Chapter III, \S 6, Remarks \& Chapter V,  \S 3, Lemma 3]{Serre}. 

If we follow the calculations from \cite[proof of Proposition 3.2]{Maire2}, we obtain:
$$|\disc(\widetilde{\F}^S)| \leq \left( |\disc(\K)| \cdot A_S \cdot p^{4[\K:\Q]}\right)^{p^{r_S+l+1}}.$$

By using the fact that $[\widetilde{\F}^{S}:\F^S]=O(1)$,
Lemma 4.4 of \cite{Maire1} provides  the result.
\end{proof}


\end{document}